\newcommand{\be}{\begin{equation}}
\newcommand{\ee}{\end{equation}}
\numberwithin{equation}{section}
\begin{document}

\definecolor{Rojo}{rgb}{1,0,0}

\newtheorem{theo}{Theorem}              
\newtheorem{propo}{Proposition}                  
\newtheorem{coro}{Corollary}
\newtheorem{lemm}{Lemma}

\theoremstyle{remark}         
\newtheorem{rema}{\bf Remark}
\newtheorem{example}{\bf Example}
\newtheorem{nota}{\bf Notation}

\title{On the connectivity of the branch and real locus of ${\mathcal M}_{0,[n+1]}$}

\author{Yasmina Atarihuana and  Rub\'en A. Hidalgo}

\address{Departamento de Matem\'atica y Estad\'{\i}stica, Universidad de la Frontera. Temuco, Chile}
\email{yasminafernanda@hotmail.com, ruben.hidalgo@ufrontera.cl}

\thanks{Partially supported by projects Fondecyt 1190001 and Anillo ACT1415 PIA-CONICYT}
\keywords{Riemann surfaces,  automorphisms, Teichm\"uller and moduli spaces}
\subjclass[2010]{30F10, 30F60, 32G15}

\begin{abstract}
If $n \geq 3$, then moduli space ${\mathcal M}_{0,[n+1]}$, of isomorphisms classes of $(n+1)$-marked spheres, is a complex orbifold of dimension $n-2$. 
Its branch locus ${\mathcal B}_{0,[n+1]}$ consists of the isomorphism classes of those $(n+1)$-marked spheres with non-trivial group of conformal automorphisms. We prove that
${\mathcal B}_{0,[n+1]}$ is connected if either $n \geq 4$ is even or if $n \geq 6$ is divisible by $3$, and that it has exactly two connected components otherwise.  The orbifold ${\mathcal M}_{0,[n+1]}$ also admits a natural real structure, this being induced by the complex conjugation on the Riemann sphere. The locus ${\mathcal M}_{0,[n+1]}({\mathbb R})$ of its fixed points, the real points, consists of the isomorphism classes of those marked spheres admitting an anticonformal automorphism. Inside this locus is the real locus ${\mathcal M}_{0,[n+1]}^{\mathbb R}$, consisting of those classes of marked spheres admitting an anticonformal involution. We prove that ${\mathcal M}_{0,[n+1]}^{\mathbb R}$ is connected for $n \geq 5$ odd, and that it is disconnected for $n=2r$ with $r \geq 5$ is odd. 
\end{abstract}

\maketitle

\section{Introduction}
Let $g \geq 0$ and $n\geq -1$ be integers such that $3g-2+n>0$ (so, for $g=0$, we have $n \geq 3$). The moduli space ${\mathcal M}_{g,[n+1]}$, of isomorphism classes of $(n+1)$-marked Riemann surfaces of genus $g$, is a complex orbifold of dimension $3g-2+n$. Its branch locus ${\mathcal B}_{g,[n+1]} \subset {\mathcal M}_{g,[n+1]}$ consists of the isomorphism classes of those admitting non-trivial conformal automorphisms. In \cite{BCI} it was proved that ${\mathcal B}_{g,[0]} \subset {\mathcal M}_{g,[0]}={\mathcal M}_{g}$ is connected only for $g \in \{3,4,13,17,19,59\}$. The complex orbifold ${\mathcal M}_{g,[n+1]}$ also admits a natural antiholomorphic automorphism of order two (a real structure) which is induced by the usual complex conjugation. The locus ${\mathcal M}_{g,[n+1]}({\mathbb R}) \subset {\mathcal M}_{g,[n+1]}$ of fixed points (the real points) of such a real structure consists of the isomorphic classes of those admitting anticonformal automorphisms. Let  ${\mathcal M}_{g,[n+1]}^{\mathbb R} \subset {\mathcal M}_{g,[n+1]}({\mathbb R})$ be the sublocus of those classes having a representative admitting an anticonformal involution (equivalently, the representative being definable over the reals). In \cite{BSS,CI,Seppala} it has been proved that ${\mathcal M }_{g,[0]}^{\mathbb R} \subset {\mathcal M}_{g}$ is connected. In \cite{CH} it was proved that ${\mathcal M}_{g,[0]}({\mathbb R}) \subset {\mathcal M}_{g}$ is also connected but that ${\mathcal M}_{g,[0]}({\mathbb R})\setminus {\mathcal M }_{g,[0]}^{\mathbb R}$ is not in general connected. 
In this paper, for each $n \geq 3$, we study the connectivity of both ${\mathcal B}_{0,[n+1]}$ and ${\mathcal M }_{0,[n+1]}^{\mathbb R}$.  

Torelli space ${\mathcal M}_{0,n+1}$ is the moduli space of isomorphisms classes of ordered $(n+1)$-marked spheres. The mapping class group ${\rm Mod}_{0,[n+1]}$ induces an action of the symmetric group ${\mathfrak S}_{n+1}$ as a group ${\mathbb G}_{n}$ of holomorphic automorphisms of ${\mathcal M}_{0,n+1}$ (called the Torelli group) and ${\mathcal M}_{0,[n+1]}={\mathcal M}_{0,n+1}/{\mathfrak S}_{n+1}$.
If $n=3$, then ${\mathcal M}_{0,4}$ can be identified with the orbifold whose underlying space is $\Omega_{3}={\mathbb C}\setminus\{0,1\}$ and all of its points being conical points of order $4$. In this case, 
${\mathbb G}_{3} \cong {\mathfrak S}_{3}$ (the action of ${\mathfrak S}_{4}$ on ${\mathcal M}_{0,4}$ is not faithful as it contains a normal subgroup $K_{3} \cong C_{2}^{2}$ acting trivially). In particular,  ${\mathcal B}_{0,[4]}={\mathcal M}_{0,[4]}$. The quotient 
 orbifold ${\Omega}_{3}/{\mathbb G}_{3}$ can be identified with the complex plane ${\mathbb C}$ with two cone points, one of order two and the other of order three, (the two cone points corresponds exactly to those $4$-marked spheres whose of conformal automorphisms is bigger than $C_{2}^{2}$). Also,  ${\mathcal M}_{0,[4]}^{\mathbb R}={\mathbb R}$. 
If $n \geq 4$, then ${\mathcal M}_{0,n+1}$ can be identified with 
the domain $\Omega_{n} \subset {\mathbb C}^{n-2}$ consisting of those tuples $(z_{1},\ldots,z_{n-2})$, where $z_{j} \in \Omega_{3}$ and $z_{i} \neq z_{j}$ for $i \neq j$. In this case, ${\mathbb G}_{n} \cong {\mathfrak S}_{n+1}$ acts faithfully as the full group of holomorphic automorphisms of ${\mathcal M}_{0,n+1}$ \cite{Royden, EK} and 
$\Omega_{n}/{\mathbb G}_{n}={\mathcal M}_{0,[n+1]}$. If ${\rm Sing}_{0,[n+1]} \subset {\mathcal M}_{0,[n+1]}$ is the locus of non-manifold points, then: (i) for $n \geq 6$, ${\rm Sing}_{0,[n+1]}={\mathcal B}_{0,[n+1]}$ \cite{Patterson} and (ii) for $n \in \{4,5\}$, the singular locus consists of exactly one point \cite{Igusa}. If, for $T \in {\mathbb G}_{n}\setminus\{I\}$, we denote by ${\rm Fix}(T) \subset \Omega_{n}$ the locus of its fixed points, then in \cite{Schneps}  it was observed that, for ${\rm Fix}(T) \neq \emptyset$ (which might not be connected), its projection to ${\mathcal M}_{0,[n+1]}$ is connected. We obtain the following connectivity of ${\mathcal B}_{0,[n+1]}$, whose proof is provided in Section \ref{Sec:pruebateo1}.

\begin{theo}\label{branchconexo}
The branch locus ${\mathcal B}_{0,[n+1]}$ is connected if either (i) $n \geq 4$ is even or (ii) $n \geq 6$ is divisible by $3$. It has exactly two connected components otherwise.
\end{theo}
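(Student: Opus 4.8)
We may assume $n\geq4$, so that ${\mathbb G}_{n}\cong{\mathfrak S}_{n+1}$ (the case $n=3$ is immediate, since ${\mathcal B}_{0,[4]}={\mathcal M}_{0,[4]}$ is a connected orbifold). The plan is to reduce the statement to a finite combinatorial question governed by the finite subgroups of $\mathrm{PGL}_{2}(\mathbb{C})$. Every $(n+1)$-marked sphere with non-trivial automorphisms has one of prime order $p$; such an automorphism is a rotation of $\widehat{\mathbb{C}}$ with exactly two fixed points, so it permutes the $n+1$ marks with cycle type $1^{k}p^{(n+1-k)/p}$ for a unique $k\in\{0,1,2\}$ with $p\mid n+1-k$. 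Let ${\mathcal B}_{p,k}\subset{\mathcal M}_{0,[n+1]}$ denote the locus of classes admitting an order-$p$ automorphism with exactly $k$ fixed marks; then ${\mathcal B}_{0,[n+1]}=\bigcup_{(p,k)}{\mathcal B}_{p,k}$ is a finite union of closed subsets. Moreover each ${\mathcal B}_{p,k}$ is connected: all permutations in ${\mathfrak S}_{n+1}$ of a fixed cycle type are conjugate, so ${\mathcal B}_{p,k}$ is the image in ${\mathcal M}_{0,[n+1]}$ of ${\rm Fix}(T)$ for any single $T\in{\mathbb G}_{n}$ of that cycle type, and this image is connected by the result of \cite{Schneps} recalled above. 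Hence the number of components of ${\mathcal B}_{0,[n+1]}$ equals the number of components of the graph $\Gamma_{n}$ with vertex set $\{(p,k)\text{ admissible}\}$ and an edge between $(p,k)$ and $(p',k')$ precisely when ${\mathcal B}_{p,k}\cap{\mathcal B}_{p',k'}\neq\emptyset$; and this happens exactly when some marked sphere has conformal automorphism group a finite $G<\mathrm{PGL}_{2}(\mathbb{C})$ — cyclic, dihedral, $A_{4}$, $S_{4}$ or $A_{5}$ — simultaneously realizing an order-$p$ element with $k$ fixed marks and an order-$p'$ element with $k'$ fixed marks. Everything thus becomes the bookkeeping of writing $n+1$ as a sum of orbit lengths of such $G$ on $\widehat{\mathbb{C}}$ ($1$ and $N$ for cyclic $\mathbb{Z}/N$; $2,m,m,2m$ for $D_{m}$ with $m\geq3$; $4,4,6,12$, $6,8,12,24$ and $12,20,30,60$ for $A_{4},S_{4},A_{5}$), subject to the fixed-point constraints.

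Next I would single out a ``core'' component $\Gamma_{n}^{0}$ of $\Gamma_{n}$ containing all admissible $\mathbb{Z}/2$- and $\mathbb{Z}/3$-strata. If $n$ is odd, ${\mathcal B}_{2,0}$ and ${\mathcal B}_{2,2}$ are joined by a $C_{2}\times C_{2}$-symmetric configuration in which one involution has both its fixed points marked and the other two have none; if $n$ is even the only $\mathbb{Z}/2$-stratum is ${\mathcal B}_{2,1}$. The $\mathbb{Z}/3$-stratum with $k\in\{0,2\}$ is joined to a $\mathbb{Z}/2$-stratum through an $S_{3}=D_{3}$-symmetric configuration, and ${\mathcal B}_{3,1}$ — present exactly when $3\mid n$ — is joined to $\Gamma_{n}^{0}$ by a $\mathbb{Z}/6$-symmetric configuration when $n$ is even (so $6\mid n$) and by the $A_{4}$-symmetric configuration that marks the $4$ vertex points, the $6$ edge--midpoints and some generic $A_{4}$-orbits when $n\equiv3\pmod6$. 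The same dihedral and cyclic models show that every ${\mathcal B}_{p,k}$ with $k\in\{0,2\}$, and every ${\mathcal B}_{p,1}$ with $2p\mid n$ or $3p\mid n$, lies in $\Gamma_{n}^{0}$: one degenerates the single free $p$-orbit of the extremal $\mathbb{Z}/p$-configuration so that a $D_{p}$-reflection (resp.\ the order-$2$ or order-$3$ rotation of a $\mathbb{Z}/2p$- or $\mathbb{Z}/3p$-symmetric model) becomes a symmetry of the marked set.

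It remains to analyze the strata ${\mathcal B}_{p,1}$ with $p\geq5$ prime and $p\mid n$. The key point is that a marked sphere possessing an order-$p$ automorphism $g$ of type $1$ (with $p\geq5$) must have \emph{cyclic} automorphism group $G$: if $h\in G$ normalizes $\langle g\rangle$ it preserves the two fixed points of $g$, and since exactly one of them is marked it fixes both, so $N_{G}(\langle g\rangle)$ lies in the abelian stabilizer of a pair of points of $\widehat{\mathbb{C}}$; as $A_{4}$ and $S_{4}$ have no element of order $\geq5$ and the normalizer of a subgroup of order $p\geq5$ in $A_{5}$ or in a dihedral group is non-abelian, $G$ must be cyclic, $G=\mathbb{Z}/N$ with $p\mid N$, whence $N\mid n$. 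Therefore, when $\gcd(n,6)=1$, every such $N$ is also coprime to $6$, no configuration in any ${\mathcal B}_{p,1}$ ($p\mid n$) carries an order-$2$ or order-$3$ automorphism, and these strata meet only one another; since all of them contain the single configuration $\{0\}\cup\mu_{n}$, they form exactly one component, disjoint from $\Gamma_{n}^{0}$, and ${\mathcal B}_{0,[n+1]}$ has exactly two components. If instead $n$ is even or $3\mid n$ with $n\geq6$, then $2p\mid n$ (resp.\ $3p\mid n$) for each such $p$, so by the previous step these strata also lie in $\Gamma_{n}^{0}$; combined with the fact that every other admissible pair is reachable from the core, $\Gamma_{n}$, and hence ${\mathcal B}_{0,[n+1]}$, is connected.

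The main obstacle is the second step together with the exhaustiveness check: running over the five families of finite subgroups of $\mathrm{PGL}_{2}(\mathbb{C})$ and the arithmetic of their orbit lengths, one must verify that every admissible $(p,k)$ other than the rigid pairs $(p,1)$ with $p\mid n$ is genuinely reachable from $\Gamma_{n}^{0}$ and that no third component can appear. The cyclicity lemma for order-$p$/type-$1$ automorphisms is what makes the dichotomy clean, with the exceptional role of $A_{4}$ in the case $p=3$ being precisely what forces connectivity when $3\mid n$.
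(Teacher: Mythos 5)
Your proposal is correct in substance and reaches the theorem by the same underlying strategy as the paper (decompose the branch locus into finitely many closed connected strata indexed by automorphism data, realize intersections by explicit configurations with larger finite M\"obius symmetry, and isolate the exceptional family that produces the second component), but your organization of the argument differs in two useful ways. First, you stratify only by prime-order automorphisms ${\mathcal B}_{p,k}$, whereas the paper works with all orders ${\mathcal B}_{m,r}$ and links an odd-order stratum to the rest through the auxiliary cyclic element $\tau$ of order $(r+1)m$ with $\sigma=\tau^{r+1}$ and its dihedral extension; your reduction is legitimate (Cauchy plus the connectedness of each $\pi_{n}({\rm Fix}(T))$, which is exactly the Corollary to Theorem \ref{teo3}), and it shrinks the bookkeeping to the orbit arithmetic of the five families of finite subgroups of ${\rm PGL}_{2}({\mathbb C})$, which parallels the paper's part (A), Remark \ref{ojito} and the $D_{(r+1)m}$, $C_{2}^{2}$ and ${\mathcal A}_{4}$ constructions. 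Second, your ``cyclicity lemma'' (a configuration with an order-$p$ automorphism of type $(p,1)$, $p\geq 5$, has cyclic full automorphism group of order $N$ with $N\mid n$) is a genuinely cleaner device than the paper's case-by-case analysis of the subgroup $\langle \sigma,\rho\rangle$ generated with an involution: it not only rules out intersection with the order-two strata when $\gcd(n,6)=1$, but shows at once that the family $\{{\mathcal B}_{p,1}:p\mid n\}$ meets no other stratum at all, while sharing the point $\{0\}\cup\mu_{n}$ (the paper's ${\mathcal B}_{n,0}$), so exactly two components result; the exceptional role of ${\mathcal A}_{4}$ when $3\mid n$, $n$ odd, appears in both arguments in the same way.

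Two small arithmetic slips in your explicit models should be repaired, though neither affects the argument. For $n\equiv 3 \pmod{12}$ the configuration ``$4$ vertices $+$ $6$ edge midpoints $+$ generic ${\mathcal A}_{4}$-orbits'' has cardinality $\equiv 10 \pmod{12}$, so it only covers $n\equiv 9\pmod{12}$; for the other residue mark only the four vertices together with generic $12$-orbits (the order-$3$ element still fixes exactly one mark and the involutions none). Similarly, in the $C_{2}\times C_{2}$ configuration joining ${\mathcal B}_{2,2}$ to ${\mathcal B}_{2,0}$ for $n$ odd, when $4\mid n+1$ you must also mark the fixed points of the third involution, which then has two fixed marks rather than none; the desired conclusion ${\mathcal B}_{2,0}\cap{\mathcal B}_{2,2}\neq\emptyset$ is unaffected.
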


In Section \ref{simetrias} we deal with the locus ${\mathcal M}_{0,[n+1]}({\mathbb R})$, which is given by the projection of those points being fixed by some antiholomorphic automorphism of $\Omega_{n}$ (the real locus ${\mathcal M}_{0,[n+1]}^{\mathbb R}$ is the projection of those points in $\Omega_{n}$ being fixed by a symmetry, that is, an antiholomorphic involution). So, the points in the complement ${\mathcal M}_{0,[n+1]}({\mathbb R}) \setminus {\mathcal M}_{0,[n+1]}^{\mathbb R}$ are the isomorphic classes of those points having antiholomorphic automorphisms but no symmetries in their stabilizers.
The next summarizes the main results at this point.

\begin{theo}\label{teoreal}
If $n \geq 4$, then the following hold.
\begin{enumerate}
\item The space $\Omega_{n}$ has exactly $[(n+3)/2]$ symmetries.
\item The locus of fixed points of a symmetry of $\Omega_{n}$ is non-empty and each of its connected components is a real submanifold of real dimension $n-2$.
\item If $n$ is even, then the projection in ${\mathcal M}_{0,[n+1]}$ of the locus of fixed points of a symmetry of $\Omega_{n}$ is a connected real orbifold of dimension $n-2$.
If $n$ is odd, then the same holds for a symmetry, with the exception of those conjugated to 
$$S(z_{1},\ldots,z_{n-2})=\left(\overline{z}_{1},\frac{\overline{z}_{1}}{\overline{z}_{3}},\frac{\overline{z}_{1}}{\overline{z}_{2}},\frac{\overline{z}_{1}}{\overline{z}_{5}},\frac{\overline{z}_{1}}{\overline{z}_{4}},\ldots,\frac{\overline{z}_{1}}{\overline{z}_{n-2}},\frac{\overline{z}_{1}}{\overline{z}_{n-3}}  \right),$$   for which the projection of its fixed points has two connected components, each one intersecting the projection of fixed points of the symmetry $J(z_{1},\ldots,z_{n-2})=(\overline{z}_{1},\ldots,\overline{z}_{n-2})$.
\item The real locus ${\mathcal M}_{0,[n+1]}^{\mathbb R}$ is connected for $n \geq 5$ odd and 
it is not connected for $n=2r$, $r \geq 5$ odd. If $p \geq 5$ is a prime, then ${\mathcal M}_{0,[2p+1]}^{\mathbb R}$ has exactly $(p-1)/2$ connected components.
\end{enumerate}
\end{theo}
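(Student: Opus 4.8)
The plan is to translate everything into the group theory of $\mathfrak{S}_{n+1}$ and the elementary geometry of anticonformal involutions of $\widehat{\mathbb{C}}$. Since $\mathbb{G}_{n}$ is the full group of holomorphic automorphisms of $\Omega_{n}$ \cite{Royden, EK} and $J$ commutes with $\mathbb{G}_{n}$ (the elements of $\mathbb{G}_{n}$ act in cross-ratio coordinates, i.e.\ by formulas with rational coefficients), the group of all holomorphic and antiholomorphic automorphisms of $\Omega_{n}$ is $\mathbb{G}_{n}\times\langle J\rangle\cong\mathfrak{S}_{n+1}\times C_{2}$. A symmetry is thus a map $TJ$ with $(TJ)^{2}=T^{2}=I$, i.e.\ $T$ an involution (possibly trivial) of $\mathfrak{S}_{n+1}$, and $TJ$ is conjugate to $T'J$ iff $T$ is conjugate to $T'$; conjugacy classes of involutions of $\mathfrak{S}_{n+1}$ are indexed by the number $k\in\{0,1,\dots,\lfloor(n+1)/2\rfloor\}$ of transpositions, so there are $\lfloor(n+1)/2\rfloor+1=\lfloor(n+3)/2\rfloor$ of them, which is (1).

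For the permutation $\sigma$ attached to $T$, one checks that $z=(z_{1},\dots,z_{n-2})$ lies in $\operatorname{Fix}(TJ)$ exactly when the marked sphere $(\infty,0,1,z_{1},\dots,z_{n-2})$ admits an anticonformal involution $\psi$ of $\widehat{\mathbb{C}}$ with $\psi(p_{j})=p_{\sigma(j)}$ for all $j$; any such $\psi$ is either a reflection (fixed-point set a circle) or the antipodal map. Non-emptiness is then immediate: take $\psi$ to be complex conjugation, place the $\sigma$-fixed labels at distinct real points and each $\sigma$-transposed pair of labels at a pair $\{w,\overline{w}\}$ with $w\notin\mathbb{R}$, all distinct. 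Moreover $\operatorname{Fix}(TJ)$ is the fixed-point set of the real-analytic antiholomorphic involution $TJ$ of the complex $(n-2)$-manifold $\Omega_{n}$, hence, where non-empty, a closed totally real submanifold of real dimension $n-2$; this proves (2).

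For (3) I would pick in each conjugacy class a convenient representative $\sigma$. If $\sigma$ fixes at least one label, every realizing $\psi$ has a fixed point, hence is a reflection; sending one, two, or three of the $\sigma$-fixed labels to $\infty$, to $\infty,0$, or to $\infty,0,1$ turns $\psi$ into $z\mapsto a\overline{z}+b$, $z\mapsto\lambda\overline{z}$, or $z\mapsto\overline{z}$, and then $\operatorname{Fix}(TJ)$ is cut out by the positions of the remaining marked points subject only to the mirror symmetry. Its projection to $\mathcal{M}_{0,[n+1]}$ is connected by a ``moving points'' argument combined with the $\mathbb{G}_{n}$-action on $\Omega_{n}$ --- the anticonformal analogue of the observation of \cite{Schneps} used for Theorem \ref{branchconexo}: slide points within a region, use relabelings to move between regions, and push a conjugate pair around (not through) another marked point to change which points lie on the mirror. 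For $n$ even this exhausts all classes, giving connectedness. For $n$ odd the only remaining class is $k=(n+1)/2$, $\sigma$ a fixed-point-free involution, which up to conjugacy is the symmetry $S$: here $\sigma$ moves $\infty,0,1$, so after normalization the realizing involution is forced to be $\psi(z)=z_{1}/\overline{z}$ with $z_{1}\in\mathbb{R}\setminus\{0,1\}$ --- a reflection when $z_{1}>0$ and the antipodal map when $z_{1}<0$ --- and one shows these two sub-loci of $\operatorname{Fix}(S)$ project to the two asserted components of the projection of $\operatorname{Fix}(S)$, the delicate point being that these projections are not joined through configurations of large symmetry. Each of them meets the projection of $\operatorname{Fix}(J)$: the all-real configurations $z_{1}=2$, $(z_{2i},z_{2i+1})=(2^{\,i+1},2^{-i})$ and $z_{1}=-1$, $(z_{2i},z_{2i+1})=(i+1,-(i+1)^{-1})$ lie in $\operatorname{Fix}(S)$ (and in $\operatorname{Fix}(J)$) on the $z_{1}>0$ and $z_{1}<0$ sides respectively.

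Finally $\mathcal{M}_{0,[n+1]}^{\mathbb{R}}$ is the union of the projections of $\operatorname{Fix}(TJ)$ over the $\lfloor(n+3)/2\rfloor$ conjugacy classes. For $n\ge5$ odd I would prove connectedness by showing that every such projection meets the projection of $\operatorname{Fix}(J)$ --- directly when $k$ is close to $(n+1)/2$, and otherwise through a chain in which the projections of $\operatorname{Fix}(TJ_{k})$ and $\operatorname{Fix}(TJ_{k+1})$ intersect, realized by explicit symmetric configurations admitting two reflections with a prescribed intersection pattern --- and combining this with (3) and with the two pieces of the $S$-locus just seen to reach $\operatorname{Fix}(J)$. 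For $n=2r$ with $r\ge5$ odd the union is disconnected, and this is the crux of the theorem: there every symmetry is of reflection type, so each point of $\mathcal{M}_{0,[n+1]}^{\mathbb{R}}$ underlies a real genus-$0$ curve whose real locus is one circle carrying $a$ of the $2r+1$ marked points (the rest in $(2r+1-a)/2$ conjugate pairs, $a$ odd), and from the collection of values of $a$ that a configuration can realize --- constrained by the fact that two distinct circles meet in at most two points, which forces rigid relations among the admissible $a$ --- one builds a locally constant invariant that is non-constant exactly when $r$ is odd and $r\ge5$; when $2r+1=2p+1$ with $p$ prime one reduces further to the configurations carrying a conformal automorphism of order $p$, parametrized up to the real structure by a scaling in $(\mathbb{Z}/p)^{*}$ modulo $\pm1$, whence exactly $(p-1)/2$ components. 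The main obstacle is this last step: unlike the connectivity statements, where it is enough to build sufficiently many explicit paths, proving disconnectedness requires producing and validating the separating invariant, and pinning the count down to exactly $(p-1)/2$ demands the enumeration tied to the prime $p$.
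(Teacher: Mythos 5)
Your treatment of parts (1)--(3) follows the paper's route (conjugacy classes of involutions in ${\mathfrak S}_{n+1}$; points of ${\rm Fix}(T\circ J)$ as marked spheres admitting an anticonformal involution realizing $\sigma$; relabelings in ${\mathbb G}_{n}$ identifying the components of ${\rm Fix}(S)$ in the quotient). But the step you single out as ``the delicate point'' for $n$ odd would fail: the two pieces $\pi_{n}(A_{1})$ and $\pi_{n}(A_{2})=\pi_{n}(A_{3})$ are \emph{not} disjoint. A configuration invariant under the Klein group $\langle z\mapsto 1/\overline{z},\, z\mapsto -1/\overline{z}\rangle$ with no marked point on the unit circle (for $n=5$, e.g.\ $\{0,\infty,2,1/2,-2,-1/2\}$; in general $\{0,\infty\}$ together with orbits $\{w,-w,1/\overline{w},-1/\overline{w}\}$) admits simultaneously an imaginary reflection and a reflection acting freely on the marked points, hence projects into both pieces. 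What the paper actually proves (Proposition \ref{reales}) is only that no element of ${\mathbb G}_{n}$ normalizing $S$ carries $A_{1}$ to $A_{2}\cup A_{3}$ and that each $A_{j}$ meets ${\rm Fix}(J)$; indeed Proposition \ref{reales}(3) asserts $\pi_{n}(A_{1})\cap {\mathcal F}_{\beta}\neq\emptyset$, and this intersection is exploited later rather than excluded, so attempting to prove literal disjointness is a dead end.

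The essential gap is in part (4), and it is the one you flag yourself. The paper's engine is Proposition \ref{interseccion}: ${\mathcal F}_{\beta_{1}}\cap{\mathcal F}_{\beta_{2}}\neq\emptyset$ if and only if $2m(\beta_{1}+\beta_{2})=(2m-1)(n+1-\gamma)$ for some $m\geq 1$, $\gamma\in\{0,1,2\}$, obtained by noting that two non-conjugate reflections stabilizing ${\mathcal C}_{\lambda}$ generate a dihedral group whose rotation has even order $2m$, and counting marked points on and off the two mirror circles. All of part (4) is then arithmetic on the resulting intersection graph: for $n$ odd one joins ${\mathcal F}_{\beta}$ to ${\mathcal F}_{\beta+1}$ \emph{through} ${\mathcal F}_{(n-1)/2-\beta}$ (your proposed direct chain ${\mathcal F}_{k}\cap{\mathcal F}_{k+1}\neq\emptyset$ is false in general: ${\mathcal F}_{0}\cap{\mathcal F}_{1}=\emptyset$ for all $n$, and ${\mathcal F}_{1}\cap{\mathcal F}_{2}=\emptyset$ for $n\geq 8$, cf.\ Remark \ref{observa}); for $n=2r$, $r\geq 5$ odd, the criterion shows ${\mathcal F}_{(r-1)/2}$ and ${\mathcal F}_{(r+1)/2}$ meet each other and nothing else; for $n=2p$ the graph decomposes as $\{0,1,p-1,p\},\{2,p-2\},\ldots,\{(p-1)/2,(p+1)/2\}$, giving exactly $(p-1)/2$ components. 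Your substitutes --- a ``locally constant invariant'' built from the admissible numbers of marked points on a real circle, and, for the prime case, a reduction to configurations carrying a conformal automorphism of order $p$ ``parametrized by a scaling in $({\mathbb Z}/p)^{*}$ modulo $\pm 1$'' --- are not constructed, and the latter counts the wrong object: $\varphi(p)/2$ is the number of components of the fixed locus of an order-$p$ element of ${\mathbb G}_{n}$ (Theorem \ref{teo3}), a thin subset, and it neither bounds nor enumerates the components of ${\mathcal M}_{0,[2p+1]}^{\mathbb R}$. Without a quantitative intersection criterion of the above type, neither the disconnectedness for $n=2r$ nor the exact count $(p-1)/2$ is reached.
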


Part (2) is given by Proposition \ref{novacio} and part (3) by Proposition \ref{reales}. The projection of the locus of fixed points of a symmetry is called an irreducible component of ${\mathcal M}_{0,[n+1]}^{\mathbb R}$ (some special care must be taken for the special type of symmetry $S$ as described in part (3)). Proposition \ref{interseccion} states a necessary and sufficient condition for two irreducible components to intersect, which permits to obtain part (4) (Propositions \ref{nimpar} and  \ref{nparporimpar}). In the case that $n=4p$, where $p \geq 2$ is a prime, the conditions of Proposition \ref{interseccion} permits to check that
${\mathcal M}_{0,[n+1]}^{\mathbb R}$ is connected when $p \in \{2,3,5\}$ but it is disconnected for $p \geq 7$ (Proposition \ref{caso4p}). For $n=4r$, where $r \geq 1$ is odd but different from a prime, it happens that ${\mathcal M}_{0,[n+1]}^{\mathbb R}$ is connected for $r\in \{1,9,15,21,27,33\}$ and it is not connected for $r\in\{25,35\}$.

\subsection{Some applications}
\subsubsection{Hyperelliptic Riemann surfaces}
If $n=2g+1$, where $g \geq 2$, then the moduli space ${\mathcal M}_{0,[n+1]}$ can be identified with the moduli space ${\mathcal H}_{g}$ of hyperelliptic Riemann surfaces of genus $g$. The branch locus ${\mathcal B}_{0,[n+1]}$ consists of those hyperelliptic Riemann surfaces admitting more conformal automorphisms than the hyperelliptic one. The description of the groups of conformal automorphisms of hyperelliptic Riemann surfaces can be found in \cite{BGG}.  Theorems \ref{branchconexo} and \ref{teoreal} assert the following simple fact.

\begin{coro}
The locus in ${\mathcal H}_{g}$, consisting of those hyperelliptic Riemann surfaces admitting more conformal automorphisms than the hyperelliptic one, is connected if $2g+1$ is divisible by $3$
and it has exactly two connected components otherwise. The real locus in ${\mathcal H}_{g}$ is connected.
\end{coro}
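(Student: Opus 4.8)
The plan is to obtain the Corollary as a direct translation of Theorems \ref{branchconexo} and \ref{teoreal} through the classical identification of ${\mathcal H}_{g}$ with ${\mathcal M}_{0,[2g+2]}$. First I would recall this identification in a form that keeps track of automorphisms: a point of ${\mathcal H}_{g}$ is a hyperelliptic surface $X$ of genus $g$, its hyperelliptic involution $\iota$ is central and unique, and the quotient map $X \to X/\langle\iota\rangle$ realizes $X/\langle\iota\rangle$ as a sphere marked by the $2g+2$ Weierstrass points. This assignment is a bijection onto ${\mathcal M}_{0,[2g+2]} = {\mathcal M}_{0,[n+1]}$ with $n = 2g+1$, and under it the group of conformal automorphisms of the marked sphere is canonically ${\rm Aut}(X)/\langle\iota\rangle$. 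Hence the marked sphere carries a non-trivial conformal automorphism precisely when ${\rm Aut}(X) \supsetneq \langle\iota\rangle$; that is, ${\mathcal B}_{0,[n+1]}$ corresponds exactly to the locus of hyperelliptic surfaces with conformal automorphisms beyond the hyperelliptic one.

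With this dictionary in place the first assertion is immediate from Theorem \ref{branchconexo}. Since $g \geq 2$ forces $n = 2g+1$ to be an odd integer $\geq 5$, alternative (i) ($n$ even) never occurs, while alternative (ii) ($n \geq 6$ divisible by $3$) holds exactly when $3 \mid 2g+1$, because an odd multiple of $3$ which is $\geq 5$ is automatically $\geq 9 > 6$. Therefore the locus in question is connected when $3 \mid 2g+1$ and has exactly two connected components otherwise. For the statement about the real locus I would note that the real structure on ${\mathcal M}_{0,[n+1]}$ induced by complex conjugation of the sphere corresponds, under the identification above, to complex conjugation on the hyperelliptic curves: conjugating the $2g+2$ marked points conjugates the double cover. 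Consequently the real locus of ${\mathcal H}_{g}$ is identified with ${\mathcal M}_{0,[n+1]}^{\mathbb R}$, and since $n = 2g+1 \geq 5$ is odd, part (4) of Theorem \ref{teoreal} gives at once that it is connected.

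The only point requiring a little care — and the nearest thing to an obstacle here — is verifying that the identification ${\mathcal H}_{g} \cong {\mathcal M}_{0,[2g+2]}$ is genuinely compatible with both structures being compared: that it carries the ``extra automorphism'' locus onto ${\mathcal B}_{0,[n+1]}$ and the anticonformal/real structure onto the one used in Theorem \ref{teoreal}. Once the identity ${\rm Aut}(X)/\langle\iota\rangle \cong {\rm Aut}\bigl(X/\langle\iota\rangle,\text{Weierstrass points}\bigr)$ is recorded and the compatibility with complex conjugation is observed, the proof reduces to substituting $n = 2g+1$ into the two theorems, with no further computation needed.
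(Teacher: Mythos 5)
Your proposal is correct and follows essentially the same route as the paper: the paper also obtains the corollary by identifying ${\mathcal H}_{g}$ with ${\mathcal M}_{0,[2g+2]}$ via the $2g+2$ branch (Weierstrass) points, noting that automorphisms beyond the hyperelliptic involution correspond to non-trivial automorphisms of the marked sphere, and then substituting $n=2g+1$ (odd, $\geq 5$) into Theorems \ref{branchconexo} and \ref{teoreal}. Your explicit remarks on why case (i) cannot occur, why divisibility of $2g+1$ by $3$ forces $n\geq 9>6$, and on compatibility of the real structures simply spell out what the paper leaves implicit.
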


The above result is related to the ones obtained in \cite{CI2} by Costa, Izquierdo and Porto, where they prove that the hyperelliptic branch locus of orientable Klein surfaces of algebraic genus $g \geq 2$ with one boundary component is connected (in the case of non-orientable Klein surfaces they proved that it has $(g+1)/2$ components, if $g$ is odd, and $(g+2)/2$ components otherwise).

\subsubsection{Generalized Fermat curves}
A closed Riemann surface $S$ is called a generalized Fermat curve of type $(k,n)$, where $k,n \geq 2$ are integers, if it admits a group $H \cong C_{k}^{n}$ of conformal automorphisms such that the quotient orbifold $S/H$ has genus zero and exactly $n+1$ cone points, each one necessarily of order $k$; we say that $H$ is a generalized Fermat group of type $(k,n)$. If $(k-1)(n-1)>2$, then in \cite{GHL} it was observed that $S$ is non-hyperelliptic and in \cite{HKLP} it was proved that $S$ has a unique generalized Fermat group of type $(k,n)$. The uniqueness fact, in particular, asserts that ${\mathcal M}_{0,[n+1]}$ can be identified with the moduli space  ${\mathcal F}_{k,n}$ of generalized Fermat curve of type $(k,n)$ and that the branch locus ${\mathcal B}_{0,[n+1]}$ consists of those admitting more conformal automorphisms than the generalized Fermat group of type $(k,n)$.

\begin{coro} For $(k-1)(n-1)>2$, 
the locus in ${\mathcal F}_{k,n}$, consisting of those admitting more conformal automorphisms than the generalized Fermat group of the $(k,n)$, is connected for $n \geq 4$ even and for $n \geq 6$ divisible by $3$, and it has exactly two connected components otherwise. Its real locus is connected for $n \geq 5$ odd, and it is not connected for $n=2r$, $r \geq 5$ odd.
\end{coro}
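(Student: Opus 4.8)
The plan is to deduce the Corollary from Theorems \ref{branchconexo} and \ref{teoreal} through the canonical identification ${\mathcal F}_{k,n}\cong{\mathcal M}_{0,[n+1]}$, after checking that this identification carries the sublocus of curves with ``extra'' conformal automorphisms onto ${\mathcal B}_{0,[n+1]}$ and the real locus of ${\mathcal F}_{k,n}$ onto ${\mathcal M}_{0,[n+1]}^{\mathbb R}$.

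First I would use the uniqueness theorem of \cite{HKLP}: if $(k-1)(n-1)>2$, then a generalized Fermat curve $S$ of type $(k,n)$ carries a \emph{unique} generalized Fermat group $H\cong C_{k}^{n}$ of that type. Consequently every element of the group ${\rm Aut}^{\pm}(S)$ of all conformal and anticonformal automorphisms of $S$ normalizes $H$ (it conjugates $H$ to another generalized Fermat group of type $(k,n)$, which must be $H$), hence descends to the quotient orbifold $S/H$ --- a sphere with $n+1$ cone points of order $k$ --- and there induces a conformal or anticonformal automorphism of the associated $(n+1)$-marked sphere; moreover an element of ${\rm Aut}^{\pm}(S)$ that descends to the identity lies in the deck group $H$. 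One thus obtains a short exact sequence $1\to H\to {\rm Aut}^{\pm}(S)\to G\to 1$ with $G$ a group of (conformal and anticonformal) automorphisms of the marked sphere $S/H$, in which the conformal parts correspond. Since the generalized Fermat cover $S\to S/H$ is characteristic among the orbifold covers with covering group of exponent $k$ branched at the $n+1$ points, every automorphism of the marked sphere in fact lifts, so $G$ is its full extended automorphism group. It follows that $S$ has more conformal automorphisms than $H$ precisely when the class of $S/H$ lies in ${\mathcal B}_{0,[n+1]}$, and that $S$ admits an anticonformal automorphism, respectively an anticonformal involution, precisely when $S/H$ does: the descent of an anticonformal involution $\tau\notin H$ is again an involution because $\tau^{2}=1$, while for the converse one invokes that for generalized Fermat curves the field of moduli is a field of definition \cite{HKLP}, so a marked sphere definable over ${\mathbb R}$ is covered by a generalized Fermat curve definable over ${\mathbb R}$, which therefore admits an anticonformal involution.

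Granting these compatibilities, together with the moduli identification ${\mathcal F}_{k,n}\cong{\mathcal M}_{0,[n+1]}$ already recorded in the excerpt, the assertion about the locus of curves with extra conformal automorphisms is exactly Theorem \ref{branchconexo}, and the assertion about the real locus is exactly part (4) of Theorem \ref{teoreal}. The one genuinely non-formal point, and hence the expected main obstacle, is the compatibility of the two real structures --- concretely, lifting an anticonformal involution from the marked sphere to the generalized Fermat curve --- since the descent direction and the entire conformal statement are immediate from the uniqueness of $H$; once this bookkeeping is in place the Corollary is a direct transcription of the quoted theorems.
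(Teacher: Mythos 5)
Your proposal is correct and follows the paper's own (essentially one-line) argument: the uniqueness of the generalized Fermat group from \cite{HKLP} identifies ${\mathcal F}_{k,n}$ with ${\mathcal M}_{0,[n+1]}$, carrying the locus of curves with more automorphisms than the Fermat group onto ${\mathcal B}_{0,[n+1]}$ and the real locus onto ${\mathcal M}_{0,[n+1]}^{\mathbb R}$, after which Theorem \ref{branchconexo} and part (4) of Theorem \ref{teoreal} give the statement verbatim. The only caveat is your appeal to a field-of-moduli/field-of-definition theorem attributed to \cite{HKLP} in order to lift anticonformal involutions from the marked sphere to the curve: that reference establishes the uniqueness of the generalized Fermat group rather than any such descent result, but this extra step is not required for the corollary as intended, since the real structures on the two moduli spaces (both induced by complex conjugation) correspond under the identification, so their real loci correspond directly.
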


\begin{rema}
As it was mentioned to us by one of the referees, the results can be applied as well to the more unknown (and difficult to work with) generic $p$-gonal curves, simple generic $p$-gonal curves \cite{CI:Maximal, CI:Maximal2, Lu-Tan, CH:p-gonal}.
\end{rema}

\begin{nota}
Throughout this paper we denote by $C_{n}$ the cyclic group of order $n$, by $C_{n}^{m}$ the direct product of $m$ copies of $C_{n}$, by $D_{m}$ the dihedral group of order $2m$, by ${\mathcal A}_{4}$ and ${\mathcal A}_{5}$ the alternating groups of orders $12$ and $60$, respectively, and by ${\mathfrak S}_{n}$ the symmetric group in $n$ symbols. 
We will denote by the symbol $\varphi(m)$ the Euler function of an integer $m$.
Also, for an integer $r \geq 2$, we set $\omega_{r}=e^{2\pi i/r}$. We use multiplication of permutations from the left.
\end{nota}

\section{Preliminaries}
\subsection{The moduli and Torelli spaces of marked surfaces}
Let $g, n \geq 0$ be integers such that $3g-2+n >0$, 
$S_{0}$ be a closed orientable surface of genus $g \geq 0$ and let $p_{1},\ldots, p_{n+1} \in S_{0}$ be $(n+1)$ fixed points. A marking of $S_{0}$ is a pair $(S,\phi)$, where $S$ is a closed Riemann surface of genus $g$ and $\phi:S_{0} \to S$ is an orientation-preserving homeomorphism. Two markings $(S_{1},\phi_{1})$ and $(S_{2},\phi_{2})$ are equivalent if there is an biholomorphism $\psi:S_{1} \to S_{2}$ such that $\phi_{2}^{-1} \circ \psi \circ \phi_{1}:S_{0} \to S_{0}$ fixes each of the points $p_{j}$ and it is homotopic to the identity relative the set $\{p_{1},\ldots,p_{n+1}\}$. The Teichm\"uller space ${\mathcal T}_{g,n+1}$ is the set of equivalence classes of the above markings, which is known to be a simply-connected complex manifold of dimension $3g-2+n$ \cite{Nag}. Let ${\rm Hom}^{+}(S_{0};\{p_{1},\ldots,p_{n+1}\})$ be the group of orientation-preserving homeomorphisms of $S_{0}$ keeping the set $\{p_{1},\ldots,p_{n+1}\}$ invariant, and let
${\rm Hom}^{+}(S_{0};(p_{1},\ldots,p_{n+1}))$ be its normal subgroup consisting of those orientation-preserving homeomorphisms of $S_{0}$ fixing each of the points $p_{j}$, $j=1,\ldots,n+1$. The subgroup ${\rm Hom}_{0}(S_{0};(p_{1},\ldots,p_{n+1}))$ of ${\rm Hom}^{+}(S_{0};(p_{1},\ldots,p_{n+1}))$ consisting of those being homotopic to the identity relative to the set $\{p_{1},\ldots,p_{n+1}\}$ is a normal subgroup of ${\rm Hom}^{+}(S_{0};\{p_{1},\ldots,p_{n+1}\})$. The quotient groups 
$${\rm Mod}_{g,[n+1]}={\rm Hom}^{+}(S_{0},\{p_{1},\ldots,p_{n+1}\})/{\rm Hom}_{0}(S_{0};(p_{1},\ldots,p_{n+1}))$$ 
$${\rm Mod}_{g,n+1}={\rm Hom}^{+}(S_{0},(p_{1},\ldots,p_{n+1}))/{\rm Hom}_{0}(S_{0};(p_{1},\ldots,p_{n+1}))$$ are, respectively, the modular group and the pure modular group of $(S_{0},\{p_{1},\ldots,p_{n+1}\})$. These groups act properly discontinuously on ${\mathcal T}_{g,n+1}$ as group of holomorphic automorphisms. By results of Royden \cite{Royden} (for $n=-1$ and $g \geq 2$) and Earle-Kra \cite{EK} (for $n \geq 0$ and $2g+n\geq 4$), 
the group ${\rm Mod}_{g,[n+1]}$ is the full group of holomorphic automorphisms of ${\mathcal T}_{g,n+1}$. The quotient spaces ${\mathcal M}_{g,[n+1]}={\mathcal T}_{g,n+1}/{\rm Mod}_{g,[n+1]}$ and ${\mathcal M}_{g,n+1}={\mathcal T}_{g,n+1}/{\rm Mod}_{g,n+1}$ are, respectively, the moduli and the Torelli spaces of $(n+1)$-marked surfaces of genus $g$ \cite{Bers1,Bers2,Rauch}, both being complex orbifolds of dimension $3g-2+n$ (see, for instance, \cite{Nag}). 
The quotient group  ${\rm Mod}_{g,[n+1]}/ {\rm Mod}_{g,n+1} \cong {\mathfrak S}_{n+1}$, acts as a group of (orbifold) automorphisms of ${\mathcal M}_{g,n+1}$ (also called the Torelli modular group) with quotient orbifold ${\mathcal M}_{g,[n+1]}$. Some details on the above can be found, for instance, in \cite{Harvey, H-M, Nag, Schneps}.

\subsection{The moduli and Torelli spaces of marked spheres}\label{Sec:moduliexplicito}
Assume $g=0$ and $n \geq 3$. It is known that,  
${\rm Mod}_{0,n+1}$ acts freely on ${\mathcal T}_{0,n+1}$, so the Torelli space ${\mathcal M}_{0,n+1}$ is a complex manifold of dimension $n-2$. For $n \geq 4$, the branch locus ${\mathcal B}_{0,[n+1]} \subset {\mathcal M}_{0,[n+1]}$ corresponds to the projection of the points of ${\mathcal M}_{0,n+1}$ with non-trivial ${\rm Mod}_{0,[n+1]}/ {\rm Mod}_{0,n+1}$-stabilizer. For $n=3$ the group ${\mathfrak S}_{4} \cong {\rm Mod}_{0,[4]}/{\rm Mod}_{0,4}$ acts in a non-faithful manner on ${\mathcal M}_{0,4}$ (in fact, every point in ${\mathcal M}_{0,4}$ has non-trivial stabilizer) so ${\mathcal B}_{0,[4]}={\mathcal M}_{0,[4]}$.

\begin{rema}
Igusa \cite{Igusa} observed, by using the invariants of the binary sextics,  that ${\mathcal M}_{0,[6]}$ can be seen as the quotient of ${\mathbb C}^{3}$ by the action of the cyclic group of order five $\langle (x,y,z) \mapsto (\omega_{5}x,\omega_{5}^{2}y,\omega_{5}^{3}z)\rangle$, where $\omega_{5}=e^{2 \pi i/5}$. Using invariants of binary quintics, it can also be obtained that ${\mathcal M}_{0,[5]}$ is the quotient of ${\mathbb C}^{2}$ by the cyclic group of order two $\langle (x,y) \mapsto (-x,-y)\rangle$. In \cite{Patterson} it was observed that, for $n \geq 6$, the moduli space ${\mathcal M}_{0,[n+1]}$ cannot be seen as the quotient of ${\mathbb C}^{n-2}$ by the action of a finite linear group.
\end{rema}

Next, we proceed to recall a natural model $\Omega_{n} \subset {\mathbb C}^{n-2}$ for ${\mathcal M}_{0,n+1}$ together with an explicit form of its automorphisms (see, for instance, \cite{Patterson}). 
Let ${\mathcal X}_{n} \subset \widehat{\mathbb C}^{n+1}$ be the configuration space of ordered $(n+1)$-tuples whose coordinates are pairwise different.
Two tuples $(p_{1},\ldots,p_{n+1}), (q_{1},\ldots,q_{n+1}) \in {\mathcal X}_{n}$ are equivalent if there is a M\"obius transformation $M \in {\rm PSL}_{2}({\mathbb C})$ such that $M(p_{j})=q_{j}$, for $j=1,\ldots,n+1$. As for $(p_{1},\ldots,p_{n+1}) \in {\mathcal X}_{n}$, there is a (unique) M\"obius transformation $M$ such that $M(p_{1})=\infty$, $M(p_{2})=0$ and $M(p_{3})=1$, each $(p_{1},\ldots,p_{n+1})$ is equivalent to a unique one of the form $(\infty,0,1,\lambda_{1},\ldots,\lambda_{n-2})$. It follows that the quotient space ${\mathcal X}_{n}/{\rm PSL}_{2}({\mathbb C})$ can be identified with 
$$\Omega_{n}=\{(z_{1},\ldots,z_{n-2}): z_{j} \in {\mathbb C} \setminus \{0,1\}, \; z_{i} \neq z_{j}\} \subset {\mathbb C}^{n-2}.$$

By the uniformization theorem, each point of ${\mathcal T}_{0,n+1}$ is the class of a pair of the form $(\widehat{\mathbb C},\phi)$, where $\phi:\widehat{\mathbb C} \to \widehat{\mathbb C}$ is an orientation-preserving homeomorphism that fixes $\infty, 0, 1$, so we may identify 
${\mathcal M}_{0,n+1}$ with $\Omega_{n}$. The permutation action of ${\mathfrak S}_{n+1}$ on the coordinates of the tuples $(p_{1},\ldots,p_{n+1}) \in {\mathcal X}_{n}$ is transported to the action of a group ${\mathbb G}_{n}$ of holomorphic automorphisms of $\Omega_{n}$, as describe below. 
Let us fix some $\sigma \in {\mathfrak S}_{n+1}$. Each point $(z_{1},\ldots,z_{n-2}) \in \Omega_{n}$ corresponds to the ordered tuple $(p_{1}=\infty,p_{2}=0,p_{3}=1,p_{4}=z_{1},\ldots,p_{n+1}=z_{n-2}) \in {\mathcal X}_{n}$. We now consider the new tuple 
$(p_{\sigma^{-1}(1)},\ldots,p_{\sigma^{-1}(n+1)}) \in {\mathcal X}_{n}$. There is a unique M\"obius transformation $M_{\sigma,\lambda}$ such that $M_{\sigma,\lambda}(p_{\sigma^{-1}(1)})=\infty$, $M_{\sigma,\lambda}(p_{\sigma^{-1}(2)})=0$ and $M_{\sigma,\lambda}(p_{\sigma^{-1}(3)})=1$; this given as 
$$M_{\sigma,\lambda}(x)=\frac{(x-p_{\sigma^{-1}(2)})(p_{\sigma^{-1}(3)}-p_{\sigma^{-1}(1)})}{(x-p_{\sigma^{-1}(1)})(p_{\sigma^{-1}(3)}-p_{\sigma^{-1}(2)})}.$$

As $\left(M_{\sigma,\lambda}(p_{\sigma^{-1}(4)}),\ldots,M_{\sigma,\lambda}(p_{\sigma^{-1}(n+1)})\right) \in \Omega_{n}$,  the map 
$$\Theta_{n}(\sigma):=T_{\sigma}: \Omega_{n} \to \Omega_{n}:
(z_{1},\ldots,z_{n-2}) \mapsto \left(M_{\sigma,\lambda}(p_{\sigma^{-1}(4)}),\ldots,M_{\sigma,\lambda}(p_{\sigma^{-1}(n+1)})\right),$$
 is an holomorphic automorphism of $\Omega_{n}$.  This procedure provides of a surjective homomorphism (we are using multiplication of permutations from the left) 
 $$\Theta_{n}:{\mathfrak S}_{n+1} \to {\mathbb G}_{n}=\langle A,B\rangle : \sigma \mapsto T_{\sigma},$$
 where $A=\Theta_{n}((1,2))$ and $B=\Theta_{n}((1,2,\ldots,n+1))$. It can be checked that 
$$A\left(z_{1},\ldots,z_{n-2}\right)=\left(\frac{1}{z_{1}},\ldots, \frac{1}{z_{n-2}}\right),\; 
B\left(z_{1},\ldots, z_{n-2}\right)= \left(\frac{z_{n-2}}{z_{n-2}-1},\frac{z_{n-2}}{z_{n-2}-z_{1}},\ldots,\frac{z_{n-2}}{z_{n-2}-z_{n-3}}\right),
$$

If $n=3$, then  $K_{3}:=\ker(\Theta_{3})=\{e,(1,2)(3,4),(1,3)(2,4),(1,4)(2,3)\} \cong C_{2}^{2}$; so ${\mathbb G}_{3} \cong {\mathfrak S}_{3}$.
For $n \geq 4$ the kernel of $\Theta_{n}$ is just the trivial group; so ${\mathbb G}_{n} \cong_{\Theta_{n}} {\mathfrak S}_{n+1}$. In fact, this observation asserts that the correct model for ${\mathcal M}_{0,4}$ is the orbifold whose underlying space is $\Omega_{4}$, but each of its points is a conical point of order $4$.

As ${\mathbb G}_{n}$ is the full group of holomorphic automorphisms of $\Omega_{n}$, every antiholomorphic automorphism of it has the form $T \circ J$, where $T \in {\mathbb G}_{n}$ and $J(z_{1},\ldots,z_{n-2})=(\overline{z}_{1},\ldots,\overline{z}_{n-2})$. The symmetries of $\Omega_{n}$ are those anticonformal automorphisms of order two. In particular, $J$ is a symmetry.
As $J$ commutes with every element of ${\mathbb G}_{n}$, the symmetries of $\Omega_{n}$ are those of the form $T \circ J$, where $T^{2}=I$. 
Summarizing all the above is the following.

\begin{lemm}\label{iso}
Let $n \geq 3$, ${\mathbb G}_{n}=\langle A, B\rangle$ and $\Theta_{n}:{\mathfrak S}_{n+1} \to {\mathbb G}_{n}: \sigma \mapsto T_{\sigma}$ be the surjective homomorphism as defined above. Then 
\begin{enumerate}
\item $\ker(\Theta_{3})=\{e,(1,2)(3,4),(1,3)(2,4),(1,4)(2,3)\} \cong C_{2}^{2}$ and ${\mathbb G}_{3} \cong {\mathfrak S}_{3}$ is the full group of holomorphic automorphisms of $\Omega_{3}$;
\item if $n \geq 4$, ${\mathbb G}_{n} \cong_{\Theta_{n}} {\mathfrak S}_{n+1}$ is the full group of holomorphic automorphisms of $\Omega_{n}$. 
\item The antiholomorphic automorphisms of $\Omega_{n}$ are those of the form $T \circ J$, where $T \in {\mathbb G}_{n}$. Those of order two are for which $T^{2}=I$.
\item The quotient orbifold $\Omega_{n}/{\mathbb G}_{n}$ is a model for the moduli space ${\mathcal M}_{0,[n+1]}$.
\end{enumerate}
\end{lemm}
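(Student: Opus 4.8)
The statement is essentially a summary of the explicit description of $\mathcal{M}_{0,n+1}$ given above, so the plan is to assemble the pieces in order, the only genuinely new computation being the determination of $\ker(\Theta_{n})$. The starting observation is a tautological reformulation coming from the construction of $T_{\sigma}$: for $\sigma\in\mathfrak{S}_{n+1}$ one has $T_{\sigma}=I$ if and only if, for every point $(z_{1},\dots,z_{n-2})\in\Omega_{n}$, writing $(p_{1},\dots,p_{n+1})=(\infty,0,1,z_{1},\dots,z_{n-2})$, the M\"obius transformation $M_{\sigma,\lambda}$ sends $p_{\sigma^{-1}(j)}$ to $p_{j}$ for all $j=1,\dots,n+1$; the first three equalities hold by the very definition of $M_{\sigma,\lambda}$, and the remaining $n-2$ are exactly the assertion that the coordinates of $T_{\sigma}(z_{1},\dots,z_{n-2})$ equal $(z_{1},\dots,z_{n-2})=(p_{4},\dots,p_{n+1})$. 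In short, $T_{\sigma}=I$ precisely when $M_{\sigma,\lambda}$ realizes the permutation $\sigma^{-1}$ of the whole ordered tuple, for every configuration.

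For $n=3$, the space $\Omega_{3}=\widehat{\mathbb{C}}\setminus\{0,1,\infty\}$ is the thrice-punctured sphere, whose holomorphic automorphism group is the group of M\"obius transformations permuting $\{0,1,\infty\}$, isomorphic to $\mathfrak{S}_{3}$; since $A(z_{1})=1/z_{1}$ induces the transposition $(0\,\infty)$ and $B(z_{1})=z_{1}/(z_{1}-1)$ induces $(1\,\infty)$, the group $\mathbb{G}_{3}=\langle A,B\rangle$ is already all of $\mathrm{Aut}(\Omega_{3})\cong\mathfrak{S}_{3}$, whence $|\ker(\Theta_{3})|=24/6=4$. A one-line M\"obius computation then shows that each double transposition lies in $\ker(\Theta_{3})$ — for $\sigma=(1,2)(3,4)$ the map $z\mapsto\lambda/z$ does the job, and similarly for $(1,3)(2,4)$ and $(1,4)(2,3)$ — so $K_{3}\cong C_{2}^{2}$ is contained in $\ker(\Theta_{3})$, and by the order count $\ker(\Theta_{3})=K_{3}$. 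For $n\ge4$, I would show that $\ker(\Theta_{n})$ is trivial: if $\sigma\ne e$ satisfied $T_{\sigma}=I$, the reformulation above would force a \emph{non-identity} M\"obius transformation to preserve, and permute according to $\sigma^{-1}$, the $n+1\ge5$ points $\{p_{1},\dots,p_{n+1}\}$, \emph{for every} configuration; but a M\"obius transformation is determined by its values at three points, so the configurations admitting a non-trivial M\"obius self-symmetry form a proper analytic subset of $\Omega_{n}$, and a generic point yields a contradiction. (Alternatively, one may simply quote the faithfulness of the $\mathfrak{S}_{n+1}$-action from \cite{Royden, EK}.) Together with the Royden--Earle--Kra identification of $\mathrm{Mod}_{0,[n+1]}$ with $\mathrm{Aut}(\mathcal{T}_{0,n+1})$ — which, since $\mathrm{Mod}_{0,n+1}\trianglelefteq\mathrm{Mod}_{0,[n+1]}$, makes $\mathrm{Mod}_{0,[n+1]}/\mathrm{Mod}_{0,n+1}\cong\mathfrak{S}_{n+1}$ the full holomorphic automorphism group of $\mathcal{M}_{0,n+1}=\Omega_{n}$ — this gives (1) and (2).

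For (3), since $J$ is an antiholomorphic involution of $\Omega_{n}$ and, by (1)--(2), $\mathbb{G}_{n}$ is the full holomorphic automorphism group, any antiholomorphic automorphism $F$ of $\Omega_{n}$ has $F\circ J$ holomorphic, so $F\circ J=T\in\mathbb{G}_{n}$ and $F=T\circ J$ (using $J^{2}=I$); moreover $A$ and $B$, and hence every element of $\mathbb{G}_{n}$, are given by rational maps with rational coefficients, so $J$ commutes with every $T\in\mathbb{G}_{n}$, whence $(T\circ J)^{2}=T\,(J T J)=T^{2}$, which equals $I$ exactly when $T^{2}=I$ (and $T\circ J$ is never the identity). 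Part (4) is the content of Section \ref{Sec:moduliexplicito}: normalizing the first three marked points to $\infty,0,1$ identifies $\mathcal{M}_{0,n+1}$ with $\Omega_{n}$ and the $\mathrm{Mod}_{0,[n+1]}/\mathrm{Mod}_{0,n+1}\cong\mathfrak{S}_{n+1}$-action with that of $\mathbb{G}_{n}$, so $\Omega_{n}/\mathbb{G}_{n}$ models $\mathcal{M}_{0,[n+1]}$ (for $n=3$ this quotient is to be read in the orbifold sense, $\Omega_{3}$ carrying cone order $4$ at every point).

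The step I expect to require the most care is the triviality of $\ker(\Theta_{n})$ for $n\ge4$: one must argue precisely that a generic ordered $(n+1)$-point configuration on $\widehat{\mathbb{C}}$, with $n+1\ge5$, admits no non-identity M\"obius self-map realizing a prescribed non-trivial permutation of it — or else lean on the cited faithfulness of the $\mathfrak{S}_{n+1}$-action. Every other step is routine given the set-up already in place.
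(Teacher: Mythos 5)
Your proposal is correct and follows essentially the same route as the paper, which presents this lemma as a summary of the preceding construction: the explicit model $\Omega_{n}$ with the generators $A,B$ of ${\mathbb G}_{n}$, the Royden--Earle--Kra theorem for the full holomorphic automorphism group, and the commutation of $J$ with ${\mathbb G}_{n}$ (via the real coefficients of $A$ and $B$) for the antiholomorphic part. The only difference is that you supply details the paper merely asserts or cites --- the computation of $\ker(\Theta_{3})=K_{3}$ via ${\rm Aut}(\Omega_{3})\cong{\mathfrak S}_{3}$ and a generic-configuration argument for the triviality of $\ker(\Theta_{n})$ when $n\geq 4$ --- which is a harmless (indeed welcome) elaboration of the same argument.
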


In the rest of the paper we use the model ${\mathcal M}_{0,n+1}=\Omega_{n}$ and we fix a regular branched cover $\pi_{n}:\Omega_{n} \to \Omega_{n}/{\mathbb G}_{n}$ with deck group ${\mathbb G}_{n}$.

\section{On the connectivity of the branch locus for $n \geq 4$}
From now on, we assume $n \geq 4$. The branch locus ${\mathcal B}_{0,[n+1]} \subset \Omega_{n}/{\mathbb G}_{n}$ consists of the images under $\pi_{n}$ of those points with non-trivial ${\mathbb G}_{n}$-stabilizer. For $\lambda=(\lambda_{1},\ldots,\lambda_{n-2}) \in \Omega_{n}$, we set 
$${\mathcal C}_{\lambda}=\{p_{1}=\infty, p_{2}=0, p_{3}=1, p_{4}=\lambda_{1},\ldots,p_{n+1}=\lambda_{n-2}\},$$
we denote by $G_{\lambda}^{+}$ be the group of M\"obius transformations keeping invariant the set ${\mathcal C}_{\lambda}$, and by $G_{\lambda}$ the group generated by $G_{\lambda}^{+}$ and those extended M\"obius transformations (compositions of complex conjugation with a M\"obius transformation) keeping invariant ${\mathcal C}_{\lambda}$ (so either $G_{\lambda}=G_{\lambda}^{+}$ or $[G_{\lambda}:G_{\lambda}^{+}]=2$). As the cardinality of ${\mathcal C}_{\lambda}$ is bigger than three, it follows that $G_{\lambda}$ is finite.
For the generic case,  $G_{\lambda}$ is trivial and in the non-generic case, $G_{\lambda}^{+}$ is isomorphic to either a cyclic group $C_{m}$, a dihedral group $D_{m}$ (of order $2m$), an alternating group ${\mathcal A}_{4}$ or ${\mathcal A}_{5}$ or the symmetric group ${\mathfrak S}_{4}$. If $G_{\lambda} \neq G_{\lambda}^{+}$, then $G_{\lambda}$ is isomorphic to either 
$D_m$, $C_{m} \times C_{2}$, $D_{m} \rtimes C_{2}$, ${\mathcal A}_{4} \times C_{2}$, ${\mathcal A}_{5} \times C_{2}$, ${\mathfrak S}_{4}$ or ${\mathfrak S}_{4} \times C_{2}$.

As already seen in the previous section, for each $\sigma \in {\mathfrak S}_{n+1}$, such that $\Theta_{n}(\sigma) \in {\mathbb G}_{n}$ fixes $\lambda \in \Omega_{n}$, there is a (unique) M\"obius transformation  $M_{\sigma,\lambda} \in G_{\lambda}^{+}$. In the other direction, 
each $M \in G_{\lambda}^{+}$ induces a permutation $\sigma_{M} \in {\mathfrak S}_{n+1}$ by the following rule:
$$\left(M(p_{\sigma_{M}^{-1}(1)}),\ldots, M(p_{\sigma_{M}^{-1}(n+1)})\right) =\left(p_{1},\ldots,p_{n+1}\right),$$
that is, $M=M_{\sigma_{M},\lambda}$. The above provides of an injective homomorphism 
$$\xi_{\lambda}:G_{\lambda}^{+} \to {\mathfrak S}_{n+1}: M \mapsto \sigma_{M},$$
that, after post-composing it with the isomomorphism $\Theta_{n}:{\mathfrak S}_{n+1} \to {\mathbb G}_{n}$, defines an injective homomorphism
$$\Theta_{n}\circ \xi_{\lambda}: G_{\lambda}^{+} \to {\mathbb G}_{n}: M \mapsto  \Theta_{n}(\xi_{\lambda}(M))=\Theta_{n}(\sigma_{M})$$
whose image is  the ${\mathbb G}_{n}$-stabilizer, ${\rm Stab}_{{\mathbb G}_{n}}(\lambda)$, of the point $\lambda=(\lambda_{1},\ldots,\lambda_{n-2})$.

\begin{rema} \label{formas}
The above (where $n \geq 4$) permits to observe the following facts (see also \cite{Schneps}).
\begin{enumerate}
\item Let $\sigma \in {\mathfrak S}_{n+1}$ be different from the identity permutation and  $T=\Theta_{n}(\sigma) \in {\mathbb G}_{n}$. It follows that $T$ has order $m \geq 2$ and it has fixed points in $\Omega_{n}$ if and only if $\sigma$ is in the conjugacy class of one of the following permutations.
\begin{enumerate}
\item[(1.a)] $(1,2,\ldots,m)(m+1,\ldots,2m) \cdots (rm+1,\ldots,(r+1)m)$, where $n=(r+1)m-1$, some $r \in \{0,1,\ldots\}$.
\item[(1.b)] $(1,2,\ldots,m)(m+1,\ldots,2m) \cdots (rm+1,\ldots,(r+1)m)(n+1)$, where $n=(r+1)m$, some $r \in \{0,1,\ldots\}$.
\item[(1.c)] $(1,2,\ldots,m)(m+1,\ldots,2m) \cdots (rm+1,\ldots,(r+1)m)(n)(n+1)$, where $n=(r+1)m+1$, some $r \in \{0,1,\ldots\}$.
\end{enumerate}

\item
\begin{enumerate}
\item[(2.a)] If $n+1 \equiv \delta \mod(m)$, where $\delta \in \{0,1,2\}$, then we may find $\lambda \in \Omega_{n}$ with ${\rm Stab}_{{\mathbb G}_{n}}(\lambda) \cong C_{m}$.

\item[(2.b)] If $n+1=2mr+m\delta_{1}+\delta_{2}$, where $\delta_{1} \in \{0,1,2\}$ and $\delta_{2} \in \{0,2\}$, then we may find $\lambda \in \Omega_{n}$ with ${\rm Stab}_{{\mathbb G}_{n}}(\lambda) \cong D_{m}$.
 
\item[(2.c)] If $n+1=12r+6\delta_{1}+4\delta_{2}$, where $\delta_{1} \in \{0,1\}$ and $\delta_{2} \in \{0,1,2\}$, then we may find $\lambda \in \Omega_{n}$ with ${\rm Stab}_{{\mathbb G}_{n}}(\lambda) \cong {\mathcal A}_{4}$.
 
\item[(2.d)] If $n+1=24r+12\delta_{1}+8\delta_{2}+6\delta_{3}$, where $\delta_{1},\delta_{2},\delta_{3} \in \{0,1\}$, then we may find $\lambda \in \Omega_{n}$ with ${\rm Stab}_{{\mathbb G}_{n}}(\lambda) \cong {\mathfrak S}_{4}$.

\item[(2.e)] If $n+1=60r+30\delta_{1}+20\delta_{2}+12\delta_{3}$, where $\delta_{1},\delta_{2},\delta_{3} \in \{0,1\}$, then we may find $\lambda \in \Omega_{n}$ with ${\rm Stab}_{{\mathbb G}_{n}}(\lambda) \cong {\mathcal A}_{5}$.

\end{enumerate}
\end{enumerate}
\end{rema}

\begin{example}\label{n=4}
For $n=4$, consider the order five automorphism $B=\Theta_{4}(\sigma)$, where  $\sigma=(1,2,3,4,5)$. Then, 
$$B(z_{1},z_{2})=\left(\frac{z_{2}}{z_{2}-1},\frac{z_{2}}{z_{2}-z_{1}}\right) \in {\mathbb G}_{4},$$
$${\rm Fix}(B)=\left\{ \lambda:=\left( \frac{1+\sqrt{5}}{2}, \frac{3+\sqrt{5}}{2}\right), \; \mu:=\left(\frac{1-\sqrt{5}}{2}, \frac{3-\sqrt{5}}{2} \right) \right\} \subset \Omega_{4}.$$

The order four element 
$$S(z_{1},z_{2})=\left(\frac{1}{1-z_{2}},\frac{z_{1}-1}{z_{2}-1}\right) \in {\mathbb G}_{4}$$
satisfies that $S \circ B \circ S^{-1}=B^{3}$ and it permutes $\lambda$ with $\mu$. Each of these two points is stabilized by the dihedral group $\langle B, S^{2}\rangle \cong D_{5}$.

\end{example}

As observed in the above example, for an element $T \in {\mathbb G}_{n}$ different from the identity and with fixed points in $\Omega_{n}$, it might happen that its locus of fixed points is non-connected. But the two components (two points) are ${\mathbb G}_{4}$-equivalent. In \cite{Schneps} Schneps proved that the connected components of the locus of fixed points of $T$ (each one a complex submanifold) forms an orbit under the action of the normalizing subgroup of $\langle T \rangle$ in ${\mathbb G}_{n}$ (for completeness, we provide a sketch of the proof since in \cite{Schneps} it is explicitly given only one of the cases).

\begin{theo}\label{teo3}
For $n \geq 4$, let  $\Theta_{n}(\sigma)=T \in {\mathbb G}_{n}$, of order $m \geq 2$ and ${\rm Fix}(T) \neq \emptyset$. Let $r+1$, where 
$r \geq 0$, be  the number of cycles of length $m$ in the decomposition of $\sigma$ (as in Remark \ref{formas}). Then the following hold.
\begin{enumerate}

\item Each connected component of ${\rm Fix}(T)$ is a complex submanifold of $\Omega_{n}$ of dimension $r$.

\item If $m=2$, then ${\rm Fix}(T)$ is connected.

\item If $m \geq 3$, then ${\rm Fix}(T)$ has exactly: (i) $\varphi(m)/2$ connected components if $m$ divides $n+1$, and (ii) $\varphi(m)$ connected components otherwise. Moreover, if 
$F_{1}$ and $F_{2}$ are any two of the connected components, then there is an element $S \in {\mathbb G}_{n}$, normalizing $\langle T \rangle$, such that $S(F_{1})=F_{2}$. 

\end{enumerate}
\end{theo}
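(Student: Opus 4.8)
The plan is to analyze the fixed-point locus of $T=\Theta_n(\sigma)$ directly on the level of marked spheres, translating the action of $T$ on $\Omega_n$ back into the geometry of the configuration $\mathcal C_\lambda$. A point $\lambda \in \mathrm{Fix}(T)$ corresponds exactly to a configuration $\mathcal C_\lambda = \{p_1,\dots,p_{n+1}\}$ admitting a M\"obius transformation $M \in G_\lambda^+$ with $\xi_\lambda(M) = \sigma$, i.e.\ $M$ permutes the marked points according to the cycle structure of $\sigma$. By Remark \ref{formas}, $\sigma$ is a product of $r+1$ cycles of length $m$, together with at most two fixed letters (cases (1.a), (1.b), (1.c)). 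Such an $M$ is a M\"obius transformation of order $m$; after normalizing by $\mathrm{PSL}_2(\mathbb C)$ we may take $M(z) = \omega_m z$ for a fixed primitive $m$-th root of unity $\omega_m$, with fixed points $0$ and $\infty$. The ``extra'' fixed letters of $\sigma$ (zero, one, or two of them) must be mapped to the fixed points $\{0,\infty\}$ of $M$, while each $m$-cycle of $\sigma$ corresponds to a free $\langle M\rangle$-orbit $\{a, \omega_m a, \dots, \omega_m^{m-1}a\}$ for some $a \in \mathbb C^* $ not fixed by $M$. This shows a connected component of $\mathrm{Fix}(T)$ is parametrized, up to the residual normalizer, by the choice of $r$ free orbit-representatives in $\mathbb C^*$ subject to open (non-collision) conditions; hence it is a complex submanifold of dimension $r$, giving (1). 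For (2), when $m=2$ the parameter $\omega_2=-1$ is forced, so there is a single ``conjugacy type'' of $M$ and the component is unique once we check that the relevant parameter space is connected (it is a complement of finitely many hyperplanes in $\mathbb C^r$, hence connected).

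For (3), the key point is that different primitive $m$-th roots of unity $\omega_m^k$, $\gcd(k,m)=1$, a priori give \emph{different} components of $\mathrm{Fix}(T)$, because the permutation induced by $z \mapsto \omega_m^k z$ on a labelled $\langle M\rangle$-orbit is a different power of the $m$-cycle. I would make this precise as follows. Fix the standard representative $\sigma$ as in Remark \ref{formas}; the condition $\xi_\lambda(M)=\sigma$ pins down, for each $m$-cycle of $\sigma$, that consecutive labels are related by $M$ (not by a power of $M$), which forces the ``rotation number'' of $M$ on that orbit to be the same primitive root $\omega_m^k$, independently of the cycle, up to simultaneous relabelling within each orbit. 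Running over the $\varphi(m)$ admissible values of $k$ produces $\varphi(m)$ a priori distinct components $F_k$. I then need two things: (a) these $\varphi(m)$ components are genuinely all of $\mathrm{Fix}(T)$ and are pairwise disjoint unless forced to coincide; (b) a collapsing phenomenon occurs precisely when $m \mid n+1$, i.e.\ in case (1.b) with the single extra letter, or more delicately — here the asymmetry between the roles of $0$ and $\infty$ matters. When $m \nmid n+1$ the two fixed points $0,\infty$ of $M$ play genuinely distinguishable roles in $\sigma$ (either both occupied by a distinguished extra letter as in (1.c), or neither, as in (1.a)), so the labelling rigidly distinguishes $\omega_m^k$ from $\omega_m^{m-k}$, giving $\varphi(m)$ components; when $m \mid n+1$ (case (1.b)) there is exactly one extra letter, mapped to one of $\{0,\infty\}$, but the swap $z \mapsto 1/z$ conjugates $M \leftrightarrow M^{-1}$ while fixing that single extra letter's orbit-pattern, so $F_k$ and $F_{m-k}$ get identified, halving the count to $\varphi(m)/2$. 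This is the step I expect to be the main obstacle: carefully verifying that the identification $F_k = F_{m-k}$ holds in the $m \mid n+1$ case and \emph{fails} otherwise, by exhibiting explicitly the normalizing element of $\langle T\rangle$ realizing it (an element $S \in \mathbb G_n$ induced by a M\"obius transformation conjugating $M$ to $M^{-1}$ and permuting the labels accordingly) and checking it does not extend to the other cases.

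Finally, the transitivity statement in (3) — that any two components $F_1,F_2$ are related by some $S \in \mathbb G_n$ normalizing $\langle T\rangle$ — follows by the same mechanism: given $F_k$ and $F_{k'}$ with $\gcd(k,m)=\gcd(k',m)=1$, there is a power $M \mapsto M^j$ relating the two rotation numbers, and this power is realized by relabelling the points of each $\langle M\rangle$-orbit by the corresponding power, which is precisely a permutation $\tau \in \mathfrak S_{n+1}$ normalizing the cyclic subgroup $\langle \sigma\rangle$; setting $S = \Theta_n(\tau)$ gives $S\langle T\rangle S^{-1} = \langle T\rangle$ and $S(F_k) = F_{k'}$. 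I would organize the write-up by first treating the base case $r=0$ (a single $m$-cycle, so $\mathrm{Fix}(T)$ is a finite set and the claim reduces to a clean count of configurations of one $\langle M\rangle$-orbit, matching Example \ref{n=4} with $m=5$, $\varphi(5)/2 = 2$ points since $5 \mid 5$), and then noting that the general $r \geq 1$ case adds only a connected (complement-of-hyperplanes) parameter factor that does not change the component count, so the combinatorics is entirely governed by the $r=0$ analysis together with the bookkeeping of the at-most-two extra letters.
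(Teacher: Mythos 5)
Your overall strategy (reading a point of ${\rm Fix}(T)$ as a configuration ${\mathcal C}_{\lambda}$ together with a distinguished M\"obius transformation $M$ of order $m$, extracting a rotation-number invariant, treating the letters fixed by $\sigma$ as marked points sitting at the fixed points of $M$, and using normalizing elements for the transitivity claim) is essentially the same as the paper's, and your treatment of part (1) and of the transitivity statement is workable. The genuine gap is at the crux of part (3). First, you have inverted the correspondence between the divisibility condition and the cycle type: $m\mid n+1$ is case (1.a) of Remark \ref{formas}, where $\sigma$ has \emph{no} fixed letters, while case (1.b), with exactly one fixed letter, has $m\mid n$, hence $m\nmid n+1$. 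Second, and more seriously, once the cases are matched correctly your proposed mechanism yields the wrong counts. The doubling to $\varphi(m)$ occurs precisely when the fixed points of $M$ are occupied by marked points (one or two extra letters): the discrete datum of which fixed point of $M$ carries the extra letter (or the ordering of the marked pair) is locally constant on ${\rm Fix}(T)$ and cannot be deformed away, since for $m\geq 3$ the fixed points of $M$ are rigidly determined by the rotation class. When no marked point sits at a fixed point of $M$ (the case $m\mid n+1$), the only invariant is the \emph{unsigned} rotation class, because exchanging the two unmarked fixed points of $M$ does not change the configuration; this gives $\varphi(m)/2$. Your mechanism asserts the opposite (halving with one extra letter, $\varphi(m)$ with none), and your own test case already contradicts it: in Example \ref{n=4} the permutation $(1,2,3,4,5)$ has no fixed letters, $5\mid n+1$, and there are $\varphi(5)/2=2$ components; while for $n=4$ and $\sigma=(1,2,3,4)(5)$ one computes directly that ${\rm Fix}(\Theta_{4}(\sigma))=\{(2,1+i),(2,1-i)\}$, so one extra letter, $4\nmid 5$, and $\varphi(4)=2$ components rather than the single component your halving would predict. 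In your write-up the two errors compensate to reproduce the statement of the theorem, but the argument as it stands does not prove it.

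A secondary gap is in part (2). For $m=2$ in the cases where $\sigma$ fixes one or two letters, the corresponding marked points must lie at the fixed points $\pm\sqrt{\lambda_{1}}$ of the involution $M(x)=\lambda_{1}/x$, so a priori there is the same two-fold discrete choice that disconnects ${\rm Fix}(T)$ when $m\geq 3$; connectedness is not just the connectedness of a hyperplane complement. What saves the $m=2$ case is a monodromy argument: $\lambda_{1}$ is a free parameter, and moving it once around the origin exchanges the two square roots, so the a priori two sheets form a single connected set. For $m\geq 3$ this fails exactly because the fixed points of $M$ do not move (they are pinned by the rotation class), which is also why the count doubles there. Your ``complement of finitely many hyperplanes, hence connected'' remark ignores this discrete factor, and the same reasoning, applied verbatim, would wrongly prove connectedness in cases (1.b) and (1.c) for $m\geq 3$ as well; the contrast between the movable branch point for $m=2$ and the rigid fixed points for $m\geq 3$ is precisely the content you need to supply.
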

\begin{proof}
Let $\sigma \in {\mathfrak S}_{n+1}$ be such that $T=\Theta_{n}(\sigma)$.  Up to conjugation, we may assume that $\sigma$ has one of the forms (see (1) of Remark \ref{formas})

\begin{enumerate}
\item $\sigma=(1,2,\ldots,m) \cdots (rm+1,\ldots,(r+1)m)$, if $n=(r+1)m-1$.
\item $\sigma=(1,2,\ldots,m) \cdots (rm+1,\ldots,(r+1)m)(n+1)$, if $n=(r+1)m$.
\item $\sigma=(1,2,\ldots,m) \cdots (rm+1,\ldots,(r+1)m)(n)(n+1)$, if $n=(r+1)m+1$.
\end{enumerate}

Note that, for $m \geq 3$, only one of these possibilities may happen. For $m=2$, both cases (1) and (3) happen for $n$ odd and case (2) only happens for $n$ even.

The image of a point $\lambda=(\lambda_{1},\ldots,\lambda_{n-2}) \in \Omega_{n}$ under $T$ is given by 
$$T(\lambda_{1},\ldots,\lambda_{n-2})=\left(M_{\sigma,\lambda}(p_{\sigma^{-1}(4)}),\ldots,M_{\sigma,\lambda}(p_{\sigma^{-1}(n+1)})\right),$$
where $M_{\sigma,\lambda}$ is the (unique) M\"obius transformation with $M_{\sigma,\lambda}(p_{\sigma^{-1}(1)})=\infty$, $M_{\sigma,\lambda}(p_{\sigma^{-1}(2)})=0$, $M_{\sigma,\lambda}(p_{\sigma^{-1}(3)})=1$ and 
$p_{1}=\infty$, $p_{2}=0$, $p_{3}=1$, $p_{4}=\lambda_{1},\ldots, p_{n+1}=\lambda_{n-2}$. Moreover, $\xi_{\lambda}(M_{\sigma,\lambda})=\sigma$. In this way, ${\rm Fix}(T)$ consists of the tuples
$(\lambda_{1},\ldots,\lambda_{n-2}) \in \Omega_{n}$ such that the set ${\mathcal C}_{\lambda}=\{\infty,0,1,\lambda_{1},\ldots,\lambda_{n-2}\}$ is kept invariant under $M_{\sigma,\lambda}$. 

\subsection*{Case $m=2$} Let us consider a point $\lambda \in {\rm Fix}(T)$.
In this case, $M_{\sigma,\lambda}(x)=\lambda_{1}/x$, whose set of fixed points is ${\rm Fix}(M_{\sigma,\lambda})=\left\{\pm \sqrt{\lambda_{1}}\right\}$.

Case (1), that is, $n-1=2r$, where $r \geq 2$. We must have $\lambda_{2j+1}=\lambda_{1}/\lambda_{2j}$, for $j=1,\ldots, (n-3)/2$. So, the locus ${\rm Fix}(T)$ is homeomorphic to $\Omega_{r+2}$ by identifying the tuple $(\lambda_{1},\lambda_{2},\lambda_{3},\ldots, \lambda_{n-2}) \in {\rm Fix}(T)$ with the tuple $(\lambda_{1},\lambda_{3},\lambda_{5},\ldots,\lambda_{n-2}) \in \Omega_{r+2}$.

Case (2), that is, $n-2=2r$, where $r \geq 1$. We must have $\lambda_{2j+1}=\lambda_{1}/\lambda_{2j}$, for $j=1,\ldots, (n-4)/2$ and $\lambda_{n-2} \in \left\{\pm \sqrt{\lambda_{1}}\right\}$. We can move continuously $\lambda_{1}$ around the origin to pass from one of its squre roots to the other.
So, the locus ${\rm Fix}(T)$ is connected and provides a two fold cover of $\Omega_{r+2}$ by projecting the tuple $(\lambda_{1},\lambda_{2},\lambda_{3},\ldots, \lambda_{n-2}) \in {\rm Fix}(T)$ to the tuple $(\lambda_{1},\lambda_{3},\lambda_{5},\ldots,\lambda_{n-3}) \in \Omega_{r+2}$.

Case (3), that is, $n-3=2r$, where $r \geq 1$.  We must have $\lambda_{2j+1}=\lambda_{1}/\lambda_{2j}$, for $j=1,\ldots, (n-5)/2$ and $\lambda_{n-3}, \lambda_{n-2} \in \left\{\pm \sqrt{\lambda_{1}}\right\}$. Similarly as above, we may move continuously $\lambda_{1}$ around the origin to pass from one of its squre roots to the other.
So, the locus ${\rm Fix}(T)$ is again connected and provides a two fold cover of $\Omega_{r+2}$ by projecting the tuple $(\lambda_{1},\lambda_{2},\lambda_{3},\ldots, \lambda_{n-2}) \in {\rm Fix}(T)$ to the tuple $(\lambda_{1},\lambda_{2},\lambda_{4},\ldots,\lambda_{n-4}) \in \Omega_{r+2}$.

\subsection*{Case $m=3$} Let us consider a point $\lambda \in {\rm Fix}(T)$.
In this case, $M_{\sigma,\lambda}(x)=1/(1-x)$, whose set of fixed points is ${\rm Fix}(M_{\sigma,\lambda})=\left\{(1\pm i\sqrt{3})/2\right\}$. 

Case (1), that is, $n-2=3r$, where $r \geq 1$. We must have $\lambda_{3j-2}=1/(1-\lambda_{3j})$ and $\lambda_{3j-1}=(\lambda_{3j}-1)/\lambda_{3j}$, for $j=1,\ldots, (n-2)/3$. So, the locus ${\rm Fix}(T)$, in this case, is homeomorphic to $\Omega_{r+2}$ by identifying the tuple $(\lambda_{1},\lambda_{2},\lambda_{3},\ldots, \lambda_{n-2}) \in {\rm Fix}(T)$ with the tuple $(\lambda_{3},\lambda_{6},\lambda_{9},\ldots,\lambda_{n-2}) \in \Omega_{r+2}$.

Case (2), that is, $n-3=3r$, where $r \geq 1$. We must have we must have $\lambda_{3j-2}=1/(1-\lambda_{3j})$ and $\lambda_{3j-1}=(\lambda_{3j}-1)/\lambda_{3j}$, for $j=1,\ldots, (n-3)/3$ and $\lambda_{n-2} \in \left\{(1\pm i\sqrt{3})/2\right\}$. We can identify a tuple $(\lambda_{1},\lambda_{2},\lambda_{3},\ldots, \lambda_{n-2}) \in {\rm Fix}(T)$ with the tuple $(\lambda_{3},\lambda_{6},\lambda_{9},\ldots,\lambda_{n-3}, \lambda_{n-2}) \in \Omega_{r+2} \times \left\{(1\pm i\sqrt{3})/2\right\}$. This provides two connected components, each one homeomorphic with $\Omega_{r+2}$, these being permuted by the generator $A$ in Lemma \ref{iso}.

Case (3), that is, $n-4=3r$, where $r \geq 0$. We must have $\lambda_{3j-2}=1/(1-\lambda_{3j})$ and $\lambda_{3j-1}=(\lambda_{3j}-1)/\lambda_{3j}$, for $j=1,\ldots, (n-4)/3$ and $\lambda_{n-3}, \lambda_{n-2} \in \left\{(1\pm i\sqrt{3})/2\right\}$. We can identify a tuple $(\lambda_{1},\lambda_{2},\lambda_{3},\ldots, \lambda_{n-2}) \in {\rm Fix}(T)$ with the tuple $(\lambda_{3},\lambda_{6},\ldots,\lambda_{n-4},\lambda_{n-3}, \lambda_{n-2}) \in \Omega_{r+2} \times \left\{\left((1+ i\sqrt{3})/2  , (1- i\sqrt{3})/2\right), \left((1- i\sqrt{3})/2  , (1+ i\sqrt{3})/2\right)\right\}$ (where $\Omega_{2}$ is just a singleton). This agains provides two connected components, each one homeomorphic with $\Omega_{r+2}$ which are permuted by the generator $A$ in Lemma \ref{iso}.

\subsection*{Case $m \geq 4$}
Let us consider a point $\lambda \in {\rm Fix}(T)$.
In this case, $M_{\sigma,\lambda}(x)=\lambda_{m-3}/(\lambda_{m-3}-x)$ and its set of fixed points is
$${\rm Fix}(M_{\sigma,\lambda})=\left\{ p_{\sigma,\lambda}^{+}=\frac{\lambda_{m-3}+ \sqrt{\lambda_{m-3}(\lambda_{m-3}-4)}}{2},
p_{\sigma,\lambda}^{-}=\frac{\lambda_{m-3}- \sqrt{\lambda_{m-3}(\lambda_{m-3}-4)}}{2} \right\}.$$

As  $M_{\sigma,\lambda}$, of order $m \geq 3$,  must preserve the set $\{\infty,0,1,\lambda_{1},\ldots,\lambda_{m-3}\}$, there is an $M_{\sigma,\lambda}$-invariant circle $\Sigma$ containing these points. As $\infty, 0, 1 \in \Sigma$, it follows that $\Sigma={\mathbb R} \cup \{\infty\}$ and also that $M_{\sigma,\lambda}$ leaves invariant the upper half-plane ${\mathbb H}$. Let $p_{\sigma,\lambda} \in {\rm Fix}(M_{\sigma,\lambda})$ be the fixed point belonging to the upper half-plane ${\mathbb H}$. 

Let $C_{0}$ (respectively, $C_{1}$) be the arc of circle starting at $p_{\sigma,\lambda}$ and ending at $0$ (respectively, ending at $1$) which is orthogonal to the real line. The angle between these two circles at $p_{\sigma,\lambda}$ is $2 \alpha_{\lambda} \pi/m$, for some $\alpha_{\lambda} \in \{1,2,\ldots,m-1\}$ relatively prime with $m$. This value $\alpha_{\lambda}$ determines uniquely the value of $\lambda_{m-3}=\lambda_{m-3}(\alpha_{\lambda})$.

Let ${\mathcal L}_{m}$ be the set of points in $\{1,2,\ldots,[(m-1)/2]\}$ relatively primes to $m$.
As $M_{\sigma,\lambda}$ sends $\infty$ to $0$ and $0$ to $1$, and it must preserve the orientation on the real line, it follows that $\alpha_{\lambda} \in {\mathcal L}_{m}$. Set ${\rm Fix}_{\alpha_{\lambda}}(T) \subset {\rm Fix}(T)$ the set of of those  $\widetilde{\lambda} \in {\rm Fix}(T)$ with $\alpha_{\widetilde{\lambda}}=\alpha_{\lambda}$ (so $\lambda \in {\rm Fix}_{\alpha_{\lambda}}(T)$).

Case (1), that is, $n+1=m(r+1)$. If $r=0$, then the tuple $(\lambda_{1},\ldots,\lambda_{n-2}) \in {\rm Fix}_{\alpha_{\lambda}}(T)$ is uniquely determined by $\alpha_{\lambda}$. If $r=1$, then  $(\lambda_{1},\ldots,\lambda_{n-2}) \in {\rm Fix}_{\alpha_{\lambda}}(T)$ is uniquely determined by $\alpha_{\lambda}$ and $\lambda_{2m-3} \in \Omega_{3}$. If $r \geq 2$, then 
the tuple $(\lambda_{1},\lambda_{2},\lambda_{3},\ldots, \lambda_{n-2}) \in {\rm Fix}_{\alpha_{\lambda}}(T)$  is uniquely determined 
by the tuple $(\lambda_{2m-3},\lambda_{3m-3},\ldots,\lambda_{(r+1)m-3}=\lambda_{n-2}) \in \Omega_{r+2}$. In this way, ${\rm Fix}_{\alpha_{\lambda}}(T)$ is homeomorphic to $\Omega_{r+2}$ (where $\Omega_{2}$ is just a singleton), and the number of connected components of ${\rm Fix}(T)$ is the cardinality of ${\mathcal L}_{m}$, that is, $\varphi(m)/2$.

Case (2), that is, $n=m(r+1)$. If $r=0$,  the tuple $(\lambda_{1},\ldots,\lambda_{n-2}) \in {\rm Fix}_{\alpha_{\lambda}}(T)$ is uniquely determined by $\alpha_{\lambda}$ and the value of $\lambda_{n-2} \in {\rm Fix}(M_{\sigma,\lambda})$. If $r=1$, then $(\lambda_{1},\ldots,\lambda_{n-2}) \in {\rm Fix}_{\alpha_{\lambda}}(T)$ is uniquely determined by $\alpha_{\lambda}$, $\lambda_{2m-3} \in \Omega_{3}$, and $\lambda_{n-2} \in {\rm Fix}(M_{\sigma,\lambda})$.
If $r \geq 2$, then the tuple $(\lambda_{1},\lambda_{2},\lambda_{3},\ldots, \lambda_{n-2}) \in {\rm Fix}_{\alpha_{\lambda}}(T)$ is uniquely determined by 
the tuple $(\lambda_{2m-3},\lambda_{3m-3},\ldots,\lambda_{(r+1)m-3}=\lambda_{n-3}) \in \Omega_{r+2}$ and $\lambda_{n-2} \in  {\rm Fix}(M_{\sigma,\lambda})$. In this way, we obtain that ${\rm Fix}_{\alpha_{\lambda}}(T)$ is homeomorphic to two disjoint copies of $\Omega_{r+2}$. These two components are permuted by the element $\Theta_{n}(\tau)$, where $\tau \in {\mathfrak S}_{n+1}$ is such that $\tau^{-1} \sigma \tau=\sigma^{-1}$ (so, $\Theta_{n}(\tau) \circ T \circ \Theta_{n}(\tau)^{-1}=T^{-1}$). In this way, the number of connected components of ${\rm Fix}(T)$ is two times the cardinality of ${\mathcal L}_{m}$, that is, $\varphi(m)$.

Case (3), that is, $n-1=m(r+1)$.  If $r=0$,  the tuple $(\lambda_{1},\ldots,\lambda_{n-2}) \in {\rm Fix}_{\alpha_{\lambda}}(T)$ is uniquely determined by $\alpha_{\lambda}$, and the value of the pair 
$(\lambda_{n-3},\lambda_{n-2})  \in \left\{(p_{\sigma,\lambda}^{-},p_{\sigma,\lambda}^{+}), (p_{\sigma,\lambda}^{+},p_{\sigma,\lambda}^{-})\right\}$.
If $r=1$, then the tuple $(\lambda_{1},\ldots,\lambda_{n-2}) \in {\rm Fix}_{\alpha_{\lambda}}(T)$ is uniquely determined by $\alpha_{\lambda}$, the value of $\lambda_{2m-3} \in \Omega_{3}$, and
the valuer of the pair 
$(\lambda_{n-3},\lambda_{n-2})  \in \left\{(p_{\sigma,\lambda}^{-},p_{\sigma,\lambda}^{+}), (p_{\sigma,\lambda}^{+},p_{\sigma,\lambda}^{-})\right\}$.
If $r \geq 2$, then the tuple  $(\lambda_{1},\lambda_{2},\lambda_{3},\ldots, \lambda_{n-2}) \in {\rm Fix}_{\alpha_{\lambda}}(T)$ is uniquely determined by the tuple
$(\lambda_{2m-3},\lambda_{3m-3},\ldots,\lambda_{(r+1)m-3}) \in \Omega_{r+2}$ and
$(\lambda_{n-3},\lambda_{n-2})  \in \left\{(p_{\sigma,\lambda}^{-},p_{\sigma,\lambda}^{+}), (p_{\sigma,\lambda}^{+},p_{\sigma,\lambda}^{-})\right\}$.
We obtain that ${\rm Fix}_{\alpha_{\lambda}}(T)$ is homeomorphic to two disjoint copies of $\Omega_{r+2}$. These two components are permuted by an element $\Theta(\tau)$ conjugating $T$ to its inverse (as in the previous case).
Again, the number of connected components of ${\rm Fix}(T)$ is two times the cardinality of ${\mathcal L}_{m}$, that is, $\varphi(m)$.

Let $\lambda, \mu \in {\rm Fix}(T)$ in different connected components. There are integers $\alpha_{\lambda}, \alpha_{\mu} \in {\mathcal L}_{m}$ such that 
$M_{\sigma,\lambda}=R_{\lambda}^{\alpha_{\lambda}}$ and $M_{\sigma,\mu}=R_{\mu}^{\alpha_{\mu}}$, where $R_{\lambda}$ (respectively, $R_{\mu}$) is the M\"obius transformation of order $m$ fixing the points $p_{\sigma,\lambda}$ and $\overline{p_{\sigma,\lambda}}$ (respectively, fixing the points $p_{\sigma,\mu}$ and $\overline{p_{\sigma,\mu}}$) which is rotation at angle $2\pi/m$ at $p_{\sigma,\lambda}$ (respectively, rotation at angle $2\pi/m$ at $p_{\sigma,\mu}$). It follows that there are integers $\beta_{\lambda}, \beta_{\mu} \in \{1,\ldots,m-1\}$, relatively primes to $m$, so that $R_{\lambda}=M_{\sigma,\lambda}^{\beta_{\lambda}}$ and $R_{\mu}=M_{\sigma,\mu}^{\beta_{\mu}}$ (in fact, $\alpha_{\lambda}\beta_{\lambda} \equiv 1 \;{\rm mod}(m)$ and $\alpha_{\mu}\beta_{\mu} \equiv 1 \;{\rm mod}(m)$). The image under $\Theta_{n} \circ \xi_{\lambda}$ of the transformation $R_{\lambda}$ is $T^{\beta_{\lambda}}$
and  the image under $\Theta_{n} \circ \xi_{\mu}$ of $R_{\mu}$ is $T^{\beta_{\mu}}$. As $T^{\beta_{\lambda}}$ and $T^{\beta_{\mu}}$ both generates the cyclic group $\langle T \rangle$, there is an element $S \in {\mathbb G}_{n}$ such that $S \circ T^{\beta_{\lambda}} \circ S^{-1}=T^{\beta_{\mu}}$.
It happens that $S$ sends the set ${\rm Fix}_{\alpha_{\lambda}}(T)$ containing $\lambda$ to the set ${\rm Fix}_{\alpha_{\mu}}(T)$ containing $\mu$.
\end{proof}

\begin{coro}
Let  $T=\Theta_{n}(\sigma) \in {\mathbb G}_{n}$, of order $m \geq 2$, with fixed points in $\Omega_{n}$, where $n \geq 4$, and let $r \geq 0$ be such that in the decomposition of $\sigma$ there are $(r+1)$ cycles of length $m$. Then the projection $\pi_{n}({\rm Fix}(T))={\mathcal B}_{m,r} \subset \Omega_{n}/{\mathbb G}_{n}$  is a connected complex orbifold of dimension $r+2$. 
\end{coro}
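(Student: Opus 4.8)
The plan is to reduce ${\mathcal B}_{m,r}$ to the image under $\pi_{n}$ of a \emph{single} connected component of ${\rm Fix}(T)$, deduce connectivity from the connectedness of such a component, and then transport the complex (orbifold) structure, and hence the dimension, along the branched covering $\pi_{n}$. Everything needed is already contained in Theorem~\ref{teo3}.

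First I would handle connectivity. If $m=2$, then ${\rm Fix}(T)$ is itself connected by Theorem~\ref{teo3}(2), so ${\mathcal B}_{m,r}=\pi_{n}({\rm Fix}(T))$ is connected, being the continuous image of a connected set. If $m\geq 3$, write ${\rm Fix}(T)=F_{1}\sqcup\cdots\sqcup F_{k}$, where $k=\varphi(m)/2$ or $k=\varphi(m)$. By the last assertion of Theorem~\ref{teo3}(3), for each $i$ there is $S_{i}\in{\mathbb G}_{n}$ with $S_{i}(F_{1})=F_{i}$; since $S_{i}$ is a deck transformation of $\pi_{n}$ we have $\pi_{n}\circ S_{i}=\pi_{n}$, whence $\pi_{n}(F_{i})=\pi_{n}(F_{1})$ for every $i$ and therefore
$$\pi_{n}({\rm Fix}(T))=\bigcup_{i=1}^{k}\pi_{n}(F_{i})=\pi_{n}(F_{1}).$$
Since $F_{1}$ is connected — indeed, the proof of Theorem~\ref{teo3} identifies it homeomorphically with $\Omega_{r+2}$ — so is ${\mathcal B}_{m,r}=\pi_{n}(F_{1})$.

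It remains to put the complex orbifold structure on ${\mathcal B}_{m,r}=\pi_{n}(F_{1})$. Over a generic point of $F_{1}$ the branched covering $\pi_{n}$ is a local biholomorphism, so ${\mathcal B}_{m,r}$ is there a complex manifold of dimension $\dim_{\mathbb C}F_{1}$ (the value given by Theorem~\ref{teo3}(1)); globally ${\mathcal B}_{m,r}$ is the quotient of $F_{1}$ by the finite group of identifications that $\pi_{n}$ induces on it, hence a complex orbifold of that same dimension. I expect the one point requiring genuine care to be the identification of this group: one wants to show it is the finite subgroup $N_{1}=\{g\in{\mathbb G}_{n}:g(F_{1})=F_{1}\}$, i.e. that distinct ${\mathbb G}_{n}$-orbits are not collapsed inside $F_{1}$ by elements outside $N_{1}$. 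At a point $\lambda\in F_{1}$ whose stabiliser ${\rm Stab}_{{\mathbb G}_{n}}(\lambda)$ is cyclic this is automatic, since any $g$ with $g\lambda\in{\rm Fix}(T)$ must then conjugate $T$ into $\langle T\rangle$ and so preserves ${\rm Fix}(T)$, and with it the component $F_{1}$; the remaining work is a short case analysis on the lower-dimensional locus where ${\rm Stab}_{{\mathbb G}_{n}}(\lambda)$ is dihedral, ${\mathcal A}_{4}$, ${\mathcal A}_{5}$ or ${\mathfrak S}_{4}$ (using Remark~\ref{formas}). Even without this refinement the conclusion already follows from the previous paragraph together with the local picture, since any further finite quotient of $F_{1}/N_{1}$ is again a connected complex orbifold of the same dimension.
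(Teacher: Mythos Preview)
Your argument is correct and is precisely the reasoning the paper has in mind: the corollary is stated immediately after Theorem~\ref{teo3} with no separate proof, since it follows at once from parts (2) and (3) of that theorem exactly as you describe (all components of ${\rm Fix}(T)$ lie in one ${\mathbb G}_{n}$-orbit, so $\pi_{n}({\rm Fix}(T))=\pi_{n}(F_{1})$ with $F_{1}$ connected). One small remark: Theorem~\ref{teo3}(1) gives $\dim_{\mathbb C}F_{1}=r$ (each component being identified with $\Omega_{r+2}$, which has complex dimension $r$), so the ``$r+2$'' in the corollary's statement appears to be a slip for $r$; your proof, which defers to Theorem~\ref{teo3}(1) for the value, correctly yields the latter.
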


\begin{rema}[On Patterson's theorem]\label{Patterson}
Let us assume $n \geq 6$.
Part (1) on the above theorem asserts that the locus of fixed points of a non-trivial element $\Theta_{n}(\sigma) \in {\mathbb G}_{n}$ has dimension $r$, where $\sigma$ is a product of $(r+1)$ disjoint cycles, each one of length $m \geq 2$, with $(r+1)m \in \{n-1,n,n+1\}$. It can be checked that $n-4 \geq r$, so the codimension of the locus of fixed points is at least two and, in particular, that ${\mathcal B}_{0,[n+1]}$ has codimension at least two. It follows from \cite{Prill} that the singular locus of ${\mathcal M}_{0,[n+1]}$ coincides with the branch locus, obtaining Patterson's theorem \cite[Theorem 3]{Patterson}.
\end{rema}

\subsection{Proof of Theorem \ref{branchconexo}}\label{Sec:pruebateo1}
\subsection*{(A)} Let us denote by ${\mathcal B}_{2}$ the locus in $\Omega_{n}/{\mathbb G}_{n}$ obtained as the projection of those points being fixed by some involution. We proceed to see that it is a connected set.  For $n \geq 4$ even, there is only one conjugacy class of involutions in ${\mathbb G}_{n}$ with fixed points,  this corresponding to the permutation
$$\sigma=(1,2)(3,4)\cdots(n-1,n)(n+1).$$

So ${\mathcal B}_{2}=\pi_{n}({\rm Fix}(\Theta_{n}(\sigma)))={\mathcal B}_{2,(n-2)/2}$, which is connected.  Let us now assume $n \geq 5$ to be odd. In this case, 
there are two conjugacy classes of involutions in ${\mathbb G}_{n}$ with fixed points, these corresponding to the following two permutations in ${\mathcal S}_{n+1}$
$$\sigma_{1}=(1,2)(3,4)\cdots(n-2,n-1)(n)(n+1)$$
$$\sigma_{2}=\left\{\begin{array}{l}
(1,2s+1)(2,2s+2)\cdots(2s-1,4s-1)(2s,4s)(n,n+1), \; n=4s+1\\
(1,2s+1)(2,2s+2)\cdots(2s-1,4s-1)(2s,4s)(n-2,n-1)(n,n+1),  \; n=4s+3
\end{array}
\right.$$

The involutions $\Theta_{n}(\sigma_{1})$ and $\Theta_{n}(\sigma_{2})$  induce, respectively, the connected sets ${\mathcal B}_{2,(n-3)/2}$ and ${\mathcal B}_{2,(n-1)/2}$ in $\Omega_{n}/{\mathbb G}_{n}$, so ${\mathcal B}_{2}={\mathcal B}_{2,(n-3)/2} \cup {\mathcal B}_{2,(n-1)/2}$. In order to get the connectivity of ${\mathcal B}_{2}$, we proceed to show that ${\mathcal B}_{2,(n-3)/2} \cap {\mathcal B}_{2,(n-1)/2} \neq \emptyset$.

As $\langle \sigma_{1},\sigma_{2}\rangle \cong C_{2}^{2}$, we have that $V_{4}:=\langle \Theta_{n}(\sigma_{1}), \Theta_{n}(\sigma_{2})\rangle \cong C_{2}^{2}$.
 First, let us observe that $[\lambda=(\lambda_{1},\ldots,\lambda_{n-2})] \in {\mathcal B}_{2,(n-1)/2} \cap {\mathcal B}_{2,(n+1)/2}$  if and only if the set 
${\mathcal C}_{\lambda}=\{\infty,0,1,\lambda_{1},\ldots,\lambda_{n-2}\}$ is invariant under  the M\"obius trasnformations  $M_{1}(x)=\lambda_{1}/x$ and $M_{2}(x)=(\lambda_{2s}-\lambda_{2s-2})(x-\lambda_{2s-1})/(\lambda_{2s}-\lambda_{2s-1})(x-\lambda_{2s-2})$ (and none of the points in the set ${\mathcal C}_{\lambda}$ is fixed by $M_{2}$). 
For instance, invariance under $M_{1}$ is guaranteed if $\lambda_{2j}=\lambda_{1}/\lambda_{2j+1}$, for $j=1,\ldots, (n-5)/2$, $\lambda_{n-3}=\sqrt{\lambda_{1}}$ and $\lambda_{n-2}=-\sqrt{\lambda_{1}}$.  In this way, we have freedom in the choices for the parameters $\lambda_{1},\lambda_{3}, \lambda_{5},\ldots, \lambda_{n-6}, \lambda_{n-4}$. Now, assuming the above conditions, $M_{2}$ has order two exactly if $\lambda_{1}-\lambda_{2s-1}\lambda_{2s+1}-\lambda_{2s-1}+\lambda_{2s+1}=0$. Under this extra assumption, we also have that 
$\langle M_{1},M_{2}\rangle \cong C_{2}^{2}$, $M_{2}(0)= \lambda_{2s-1}$ and $M_{2}(\lambda_{1})=\lambda_{2s+1}$. If we set $\lambda_{3}=M_{2}(\lambda_{2s+3}), \ldots, \lambda_{2s-3}=M_{2}(\lambda_{4s-3})$ and, in the case $n=4s+3$, the points $\lambda_{n-4}$ and $\lambda_{n-5}$ are the fixed points of $M_{2}\circ M_{1}$, that $\lambda_{2s-1} \pm \sqrt{\lambda_{2s-1}^{2}-\lambda_{1}}$, then ${\mathcal C}_{\lambda}$ will be invariant under $\langle M_{1},M_{2}\rangle \cong C_{2}^{2}$ as desired.

\subsection*{(B)} Let $T \in {\mathbb G}_{n}$ be of even order $2k$, where $k \geq 1$. If $\lambda \in {\rm Fix}(T)$, then $\lambda$ is fixed under the involution 
$T^{k}$, in particular, $\pi_{n}({\rm Fix}(T))$ intersects ${\mathcal B}_{2}$.

\subsection*{(C)} If $T=\Theta_{n}(\sigma)$ has odd order $m \geq 3$, then we may assume, up to conjugation, that
\begin{enumerate}
\item $\sigma=(1,2,\ldots,m) \cdots (rm+1,\ldots,(r+1)m)$, if $n=(r+1)m-1$.
\item $\sigma=(1,2,\ldots,m) \cdots (rm+1,\ldots,(r+1)m)(n+1)$, if $n=(r+1)m$.
\item $\sigma=(1,2,\ldots,m) \cdots (rm+1,\ldots,(r+1)m)(n)(n+1)$, if $n=(r+1)m+1$.
\end{enumerate}

As before, $\pi_{n}({\rm Fix}(T))={\mathcal B}_{m,r}$. Let $\tau \in {\mathfrak S}_{n+1}$ be the permutation, of order $(r+1)m$, defined as
$$
\begin{array}{c}
\tau(lm+j)=(l+1)m+j, \quad j=1,\ldots,m, \; l=0,\ldots,r-1,\\
\tau(rm+j)=j+1, j=1,\ldots,m-1, \;\tau((r+1)m)=1.
\end{array}
$$

\begin{rema}\label{ojito}
(a) Note that $\Theta_{n}(\tau)$ has a non-empty locus of fixed points, contained inside the locus of fixed points of $T$, and: (i) for $n=(r+1)m-1$, $\tau$ does not fixes any of the symbols, 
(ii) for $n=(r+1)m$, $\tau$ only fixes $n+1$ and (iii) for $n=(r+1)m+1$, $\tau$ only fixes $n$ and $n+1$. 
(b) It can be seen that $\sigma=\tau^{r+1}$, in particular, that ${\mathcal B}_{(r+1)m,0} \cap {\mathcal B}_{m,r} \neq \emptyset$.
(c) If $n \in \{(r+1)m-1, (r+1)m+1\}$, then there is a permutation $\eta \in {\mathfrak S}_{n+1}$ of order two (of the same conjugacy class of either $\sigma_{1}$ or $\sigma_{2}$) such that $\langle \tau,\eta\rangle \cong D_{(r+1)m}$.
\end{rema}

As a consequence of part (c) of Remark \ref{ojito}, if $\sigma$ is as in cases (1) or (3), then ${\mathcal B}_{(r+1)m,0}$ intersects ${\mathcal B}_{2}$. Now, part (b) of the same remark asserts that  ${\mathcal B}_{m,r} \cap {\mathcal B}_{(r+1)m,0} \neq \emptyset$. It follows that the sub-locus of ${\mathcal B}_{0,[n+1]}$, consisiting of the projections under $\pi_{n}$ of the points being fixed by those automorphisms $\Theta_{n}(\sigma)$, where $\sigma$ is either as in (1) or (3), is connected.

In order to obtain connectivity (or not) of ${\mathcal B}_{0,[n+1]}$, we need to study the locus of fixed points  of those automorphisms coming from situation (2) above. So, 
let us assume $n=(r+1)m$ and $\sigma$ as in (2).

\subsubsection*{\bf The case $n \geq 4$ even}
By part (b) of Remark \ref{ojito},  ${\mathcal B}_{m,r} \cap {\mathcal B}_{(r+1)m,0} \neq \emptyset$, and by (B) 
${\mathcal B}_{(r+1)m,0} \cap {\mathcal B}_{2} \neq \emptyset$. All the above then asserts that ${\mathcal B}_{0,[n+1]}$ is connected.

\subsubsection*{\bf The case $n \geq 5$ odd}
In this case, $r \geq 0$ is even and $m \geq 3$ odd. If ${\mathcal B}_{m,r} \cap {\mathcal B}_{2} \neq \emptyset$, then there is a point $\lambda \in {\rm Fix}(T) \cap {\rm Fix}(S)$, where $S=\Theta_{n}(\rho)$, $\rho \in {\mathfrak S}_{n+1}$ is in the same conjugacy class of either $\sigma_{1}$ or $\sigma_{2}$ (so it has no fixed points or exactly two), and $\langle \sigma, \rho\rangle$ being isomorphic to either a cyclic group, a dihedral group, ${\mathcal A}_{4}$, ${\mathcal A}_{5}$ or ${\mathfrak S}_{4}$. The cyclic situation cannot happen as, in this case, $\rho$ should also have only one fixed point, a contradiction. In the dihedral situation, $\rho$ will have to permute two fixed points of $\sigma$, again a contradiction. In the cases ${\mathfrak S}_{4}$ and ${\mathcal A}_{5}$, there should be an involution in $\langle \sigma, \rho\rangle$ permuting two fixed points of $\sigma$, a contradiction. So the only possible situation is 
$\langle \sigma, \rho \rangle \cong {\mathcal A}_{4}$, $m=3$ and  
$n=3(1+2(s+2t))$, for a suitable $s \in \{0,1\}$ and $t \geq 0$, in which case, ${\mathcal B}_{3,2(s+2t)} \cap {\mathcal B}_{2} \neq \emptyset$.
As, by part (b) of Remark \ref{ojito}, ${\mathcal B}_{3,2(s+2t)} \cap {\mathcal B}_{n,0} \neq \emptyset$, we again obtain connectivity of ${\mathcal B}_{0,[n+1]}$ in the case $n$ is divisible by $3$.

In the complementary cases, that is,  for $n \geq 5$ odd, relatively prime to $3$, there is not a permutation in ${\mathfrak S}_{n+1}$ (in the conjugacy class of either $\sigma_{1}$ or $\sigma_{2}$) normalising $\langle \sigma \rangle$, in particular, ${\mathcal B}_{m,r} \cap {\mathcal B}_{2}=\emptyset$, for all possibilities $n=m(r+1)$. As ${\mathcal B}_{n,0}  \cap {\mathcal B}_{m,r} \neq \emptyset$, we obtain that ${\mathcal B}_{0,[n+1]}$ has exactly two connected components (one containing ${\mathcal B}_{2}$ and the other containing ${\mathcal B}_{n,0}$).

\section{On the connectivity of the real locus ${\mathcal M}^{\mathbb R}_{0,[n+1]}$: Proof of Theorem \ref{teoreal}}\label{simetrias}
In this section we proceed to prove Theorem \ref{teoreal}.
As previously noted, a symmetry of $\Omega_{n}$ has the form $T \circ J$, where $T=\Theta_{n}(\sigma) \in {\mathbb G}_{n}$ satisfies that $T^{2}=I$ (that is, $\sigma^{2}$ is the identity permutation). As $J$ commutes with every element of ${\mathbb G}_{n}$, two symmetries $S_{1}=T_{1} \circ J$ and $S_{2}=T_{2} \circ J$ are conjugated by elements of ${\mathbb G}_{n}$ if and only if the elements $T_{1}$ and $T_{2}$ are conjugated. It follows that 
the number of symmetries, up to conjugation by holomorphic automorphisms, is equal to one plus the number of conjugacy classes of elements of order two in the symmetric group ${\mathfrak S}_{n+1}$, that is, $[(n+3)/2]$ (this provides part (1) of Theorem \ref{teoreal}). 
Up to conjugacy, we may assume $$\sigma=(1,2)(3,4)\cdots(2\beta-1,2\beta)(2\beta+1)\cdots(n+1), \; \beta\in \{0,1,\ldots,[(n+1)/2]\},$$
where for $\beta=0$ we mean $\sigma$ the identity permutation. In this case,
\begin{equation}\label{formaT}
T(\lambda_{1},\ldots,\lambda_{n-2})=\left\{\begin{array}{ll}

(\lambda_{1},\ldots,\lambda_{n-2}), & \beta=0.\\

\left(\frac{1}{\lambda_{1}}, \frac{1}{\lambda_{2}}, \ldots,
\frac{1}{\lambda_{n-2}}\right), & \beta=1.\\

\left(\lambda_{1},\frac{\lambda_{1}}{\lambda_{2}}, \frac{\lambda_{1}}{\lambda_{3}}, \ldots,
\frac{\lambda_{1}}{\lambda_{n-2}}\right), & \beta=2.\\

\left(\lambda_{1},\frac{\lambda_{1}}{\lambda_{3}}, \frac{\lambda_{1}}{\lambda_{2}}, \ldots,
\frac{\lambda_{1}}{\lambda_{2s+1}}, \frac{\lambda_{1}}{\lambda_{2s}}, \frac{\lambda_{1}}{\lambda_{2s+2}}, \frac{\lambda_{1}}{\lambda_{2s+3}}, \frac{\lambda_{1}}{\lambda_{n-2}}\right), & s=\beta-2, \; \beta\geq 3.\\
\end{array}
\right.
\end{equation}

and 
\begin{equation}\label{formaS}
S(\lambda_{1},\ldots,\lambda_{n-2})=\left\{\begin{array}{ll}

(\overline{\lambda}_{1},\ldots,\overline{\lambda}_{n-2}), & \beta=0.\\

\left(\frac{1}{\overline{\lambda}_{1}}, \frac{1}{\overline{\lambda}_{2}}, \ldots,
\frac{1}{\overline{\lambda}_{n-2}}\right), & \beta=1.\\

\left(\overline{\lambda}_{1},\frac{\overline{\lambda}_{1}}{\overline{\lambda}_{2}}, \frac{\overline{\lambda}_{1}}{\overline{\lambda}_{3}}, \ldots,
\frac{\overline{\lambda}_{1}}{\overline{\lambda}_{n-2}}\right), & \beta=2.\\

\left(\overline{\lambda}_{1},\frac{\overline{\lambda}_{1}}{\overline{\lambda}_{3}}, \frac{\overline{\lambda}_{1}}{\overline{\lambda}_{2}}, \ldots,
\frac{\overline{\lambda}_{1}}{\overline{\lambda}_{2s+1}}, \frac{\overline{\lambda}_{1}}{\overline{\lambda}_{2s}}, \frac{\overline{\lambda}_{1}}{\overline{\lambda}_{2s+2}}, \frac{\overline{\lambda}_{1}}{\overline{\lambda}_{2s+3}}, \frac{\overline{\lambda}_{1}}{\overline{\lambda}_{n-2}}\right), & s=\beta-2, \; \beta\geq 3.
\end{array}
\right.
\end{equation}

Let us denote by ${\rm Fix}(S) \subset \Omega_{n}$ the locus of fixed points of a symmetry $S$.
 The real locus ${\mathcal M}_{0,[n+1]}^{\mathbb R} \subset \Omega_{n}/{\mathbb G}_{n}$ is the union of all the $\pi_{n}$-images of these fixed sets. Set ${\mathcal F}_{0}=\pi_{n}({\rm Fix}(J))$.

\begin{propo}\label{novacio}
If $S$ is a symmetry of $\Omega_{n}$, then ${\rm Fix}(S) \neq \emptyset$ and every connected component of ${\rm Fix}(S)$ is a real submanifold, of dimension $n-2$. 
\end{propo}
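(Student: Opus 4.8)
The plan is to prove Proposition \ref{novacio} by treating the explicit normal forms of the symmetry $S$ listed in \eqref{formaS}, indexed by $\beta\in\{0,1,\dots,[(n+1)/2]\}$, and in each case exhibiting a nonempty real-analytic set of fixed points. The underlying principle is that a point $\lambda$ lies in ${\rm Fix}(S)$ if and only if the set ${\mathcal C}_{\lambda}=\{\infty,0,1,\lambda_{1},\dots,\lambda_{n-2}\}$ is invariant under the extended M\"obius transformation $\overline{M}$ obtained by composing $M_{\sigma,\lambda}$ with complex conjugation; geometrically, $\overline{M}$ is an anticonformal involution of $\widehat{\mathbb C}$, hence a reflection in a circle or antipodal-type map, and we must arrange the $\lambda_{j}$ to form an $\overline{M}$-invariant configuration containing $\infty,0,1$. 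Since $\infty,0,1\in{\mathbb R}\cup\{\infty\}$ and a reflection fixing three points of a circle is reflection in that circle, the relevant invariant circle is forced to be $\widehat{\mathbb R}$, so $\overline{M}$ is ordinary complex conjugation precomposed with a real M\"obius map.

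First I would dispatch the easy cases. For $\beta=0$, $S=J$ and ${\rm Fix}(J)=\Omega_{n}\cap{\mathbb R}^{n-2}$, which is visibly nonempty (choose distinct reals avoiding $0,1$) and is a real submanifold of dimension $n-2$. For $\beta=1$, $S(\lambda)=(1/\overline{\lambda}_{1},\dots,1/\overline{\lambda}_{n-2})$, and the fixed-point condition is $\lambda_{j}=1/\overline{\lambda}_{j}$, i.e.\ $|\lambda_{j}|=1$ for every $j$; picking distinct points on the unit circle different from $1$ (note $0,\infty$ are automatically swapped correctly and $1$ is fixed) gives a nonempty set which is a product of circles minus diagonals, again a real manifold of dimension $n-2$. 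For $\beta\ge 2$ the map $T$ fixes $\lambda_{1}$ and, roughly, sends $\lambda_{j}\mapsto \lambda_{1}/\lambda_{j}$ up to the transposition of consecutive indices in positions $\ge 2$ (as displayed in \eqref{formaT}); the condition $S(\lambda)=\lambda$ then reads $\lambda_{1}=\overline{\lambda}_{1}$ together with paired equations $\lambda_{j}=\overline{\lambda}_{1}/\overline{\lambda}_{k}$ for the appropriate pairing $\{j,k\}$. So one takes $\lambda_{1}=t$ a positive real, and for each pair $\{j,k\}$ (with $j\ne k$) one is free to choose $\lambda_{j}$ arbitrarily in ${\mathbb C}\setminus\{0,1,t,\overline{t}/\cdots\}$ subject only to the open genericity conditions, and sets $\lambda_{k}=\overline{t}/\overline{\lambda}_{j}=t/\overline{\lambda}_{j}$; when $j=k$ is a fixed index of the pairing one requires $\lambda_{j}\overline{\lambda}_{j}=t$, i.e.\ $|\lambda_{j}|^{2}=t$, a real hypersurface. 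Counting real parameters: $\lambda_{1}$ contributes $1$ real dimension, each two-element pair contributes $2$ (the two real coordinates of the free member), and each fixed index contributes $1$; summing over the orbit structure of $T$ on the $n-2$ coordinates gives exactly $n-2$ real dimensions.

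The remaining work is to check two things carefully in each $\beta\ge 3$ case: (i) that one can actually make all the $n-2$ coordinates pairwise distinct and different from $0,1$ — this is where the diagonals and the special values created by $T$ (such as $\lambda_{1}$ itself appearing as a coordinate, or $t/\overline{\lambda}_j$ colliding with $\lambda_i$) must be avoided; since the forbidden locus is a proper real-analytic subset of the parameter space, a dimension count shows the admissible set is nonempty and open, hence a submanifold of the full dimension $n-2$; and (ii) that near such a point ${\rm Fix}(S)$ is genuinely a smooth manifold of that dimension, which follows because $S$ is a smooth involution of the complex manifold $\Omega_{n}$, so its fixed locus is a totally real (hence real-analytic) submanifold, and the tangent space computation at one admissible point gives dimension $n-2$ (the $+1$ eigenspace of the ${\mathbb R}$-linear differential $dS$, which for an antiholomorphic involution always has real dimension equal to the complex dimension $n-2$). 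The main obstacle I expect is purely bookkeeping: correctly reading off the pairing of indices induced by the transposition pattern in \eqref{formaT}/\eqref{formaS} for general $\beta$, and verifying that the system of equations $S(\lambda)=\lambda$ can be solved with all distinctness constraints satisfied — i.e.\ ensuring the explicit construction does not accidentally force two coordinates to coincide. Once the index pairing is pinned down, the existence of a nonempty solution set and the dimension claim both reduce to the soft observation that the fixed set of an antiholomorphic involution on a complex $(n-2)$-manifold is, where nonempty, a real-analytic submanifold of real dimension $n-2$, combined with an explicit nonempty point produced coordinate-by-coordinate as above.
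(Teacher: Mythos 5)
Your proposal is correct and follows essentially the same route as the paper: conjugate $S$ into the normal forms \eqref{formaT}--\eqref{formaS}, translate $S(\lambda)=\lambda$ into the explicit coordinate equations ($\lambda$ real for $\beta=0$, $|\lambda_j|=1$ for $\beta=1$, $\lambda_1$ real with paired relations $\overline{\lambda}_{2k+1}=\lambda_1/\lambda_{2k}$ and modulus conditions $|\lambda_j|=\sqrt{\lambda_1}$ for $\beta\geq 2$), and solve them, with the real-dimension count $1+2s+(n-3-2s)=n-2$ matching the paper's description. Your added observation that the fixed locus of an antiholomorphic involution is automatically a totally real submanifold of real dimension $n-2$ is a harmless (and correct) supplement to the paper's direct parametrization.
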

\begin{proof}
Up to conjugation by a suitable element of ${\mathbb G}_{n}$, we may assume $S=T \circ J$, where $T$ and $S$ have the forms as in \eqref{formaT} and \eqref{formaS}, respectively. In this way, $\lambda=(\lambda_{1},\ldots,\lambda_{n-2}) \in \Omega_{n}$ is a fixed point of $S$ if and only if $T(\lambda)=\overline{\lambda}$. Now, as ${\rm Fix}(J)=\Omega_{n} \cap {\mathbb R}^{n-2} \neq \emptyset$, we only need to take care of the case when $T$ is different from the identity (so of order two). 
Let $\lambda=(\lambda_{1},\ldots,\lambda_{n-2}) \in \Omega_{n}$. If $\beta=1$, then $\lambda \in {\rm Fix}(S)$ if and only if $|\lambda_{j}|=1$, $j=1,\ldots n-2$.  
If $\beta=2$,  then $\lambda \in {\rm Fix}(S)$ if and only if $\lambda_{1} \in (0,+\infty)\setminus\{1\}, \; |\lambda_{j}|=\sqrt{\lambda_{1}}, j=2,\ldots, n-2.$ 
If $\beta \geq 3$, then $\lambda \in {\rm Fix}(S)$ if and only if $\lambda_{1} \in (0,+\infty)\setminus\{1\}$, 
$\overline{\lambda}_{3}=\frac{\lambda_{1}}{\lambda_{2}}, \overline{\lambda}_{5}=\frac{\lambda_{1}}{\lambda_{4}}, \ldots,\overline{\lambda}_{2s+1}=\frac{\lambda_{1}}{\lambda_{2s}}$, and $|\lambda_{j}|=\sqrt{\lambda_{1}}$, $j=2s+2,\ldots, n-2$.
As in any of the above situations, the equations on the coordinates have solution, so we are done (see also Remark \ref{puntosfijos}). 
\end{proof}

\begin{rema}[Fixed points description]\label{puntosfijos}
The above proof also permits to obtain a description of the locus of fixed points of the symmetries of $\Omega_{n}$.
For each $\lambda=(\lambda_{1},\ldots,\lambda_{n-2}) \in \Omega_{n}$ we set ${\mathcal C}_{\lambda}=\{p_{1}=\infty, p_{2}=0,p_{3}=1, p_{4}=\lambda_{1},\ldots, p_{n+1}=\lambda_{n-2}\}$. Let us consider a symmetry $S=\Theta_{n}(\sigma) \circ J$, where $\sigma \in {\mathfrak S}_{n+1}$ is either the identity or a permutation of order two. Then

(1) If $\sigma$ is the identity, that is, $S=J$, then $\lambda \in {\rm Fix}(S)$ if and only if ${\mathcal C}_{\lambda} \subset {\mathbb R}\cup \{\infty\}$, that is, ${\mathcal C}_{\lambda}$ is point-wise fixed by the usual complex conjugation map $x \mapsto \overline{x}$. In this case, the connected components of fixed points corresponds to all possible orderings that the collection $\{\lambda_{1},\ldots, \lambda_{n-2}\}$ has in ${\mathbb R}-\{0,1\}$. To be more precise, let ${\mathcal L}$ be the collection of triples $(I_{1},I_{2},I_{3})$, where $I_{1}=(i_{1},\ldots,i_{a})$, $I_{2}=(i_{a+1},\ldots,i_{a+b})$, $I_{3}=(i_{a+b+1},\ldots,i_{n-2})$ and $\{i_{1},\ldots, i_{n-2}\}=\{1,\ldots,n-2\}$ (we permit some of them to be empty tuples). For each tuple $(I_{1},I_{2},I_{3}) \in {\mathcal L}$ we let $L(I_{1},I_{2},I_{3})$ be the set of points $(\lambda_{1},\ldots,\lambda_{n-2}) \in {\rm Fix}(J)=\Omega_{n} \cap {\mathbb R}^{n-2}$ such that $\lambda_{i_{1}}<\cdots<\lambda_{i_{a}}<0<\lambda_{i_{a+1}}<\cdots<\lambda_{i_{a+b}}<1<\lambda_{i_{a+b+1}}<\cdots<\lambda_{i_{n-2}}$. We may observe that ${\rm Fix}(J)$ is the disjoint union of all the sets $L(I_{1},I_{2},I_{3})$, where $(I_{1},I_{2},I_{3}) \in {\mathcal L}$. Observe that, for a given tuple $(I_{1},I_{2},I_{3}) \in {\mathcal L}$ as above, we may find an element $T=\Theta_{n}(\sigma) \in {\mathbb G}_{n}$ (where the permutation $\sigma$ is chosen to keep fix each of the indices $1$, $2$ and $3$)  such that $T(L(I_{1},I_{2},I_{3}))=L((1,\ldots,a),(a+1,\ldots,a+b),(a+b+1,\ldots,n-2)):=L$. Now, given a point $(\lambda_{1},\ldots, \lambda_{n-2}) \in L$, we have the ordered collection
$$\lambda_{1}<\cdots<\lambda_{a}<0<\lambda_{a+1}<\cdots<\lambda_{a+b}<1<\lambda_{a+b+1}<\cdots<\lambda_{n-2}.$$
We may find a M\"obius transformation in ${\rm PSL}_{2}({\mathbb R})$ sending $\lambda_{n-4}$ to $0$, $\lambda_{n-3}$ to $1$ and $\lambda_{n-2}$ to $\infty$. Such a M\"obius transformation induces an element $T \in {\mathbb G}_{n}$ that sends $L$ to $L((1,2,\ldots,n-2),\emptyset,\emptyset)$. This permits to observe that all the connected components of ${\rm Fix}(J)$ are ${\mathbb G}_{n}$-equivalent.

(2) If $\sigma$ has order two, it is a product of $\beta \geq 1$ disjoint transpositions, $2\beta<n+1$, and fixes each of the points  $\{j_{1},\ldots,j_{n+1-2\beta}\} \subset \{1,\ldots,n+1\}$,  then $\lambda \in {\rm Fix}(S)$ if and only if there is a reflection (that is, conjugated to $z \mapsto \overline{z}$) keeping invariant the set ${\mathcal C}_{\lambda}$ and fixing exactly the $n+1-2\beta$ points $p_{j_{1}},\ldots p_{j_{n+1-2\beta}}$. In this case, the connected components of fixed points corresponds to all possible ordering that the collection $\{p_{j_{1}},\ldots p_{j_{n+1-2\beta}}\}$ has in the circle of fixed points of the reflection.

(3) If $n \geq 5$ is odd, $2\beta=n+1$, and $\sigma$ is a product of $\beta$ disjoint transpositions,  
then $\lambda \in {\rm Fix}(S)$ if and only if there is either an imaginary reflection (that is, conjugated to $z \mapsto -1/\overline{z}$) or a reflection keeping invariant the set ${\mathcal C}_{\lambda}$ (and the reflection fixing none of them). By considering the model of $S$ as in \eqref{formaS}, we observe that ${\rm Fix}(S)$ has exactly three connected components: 

$$A_{1}:=\left\{(\lambda_{1},\ldots, \lambda_{n-2}) \in \Omega_{n}: \lambda_{1} \in (-\infty,0), \; \lambda_{2k+1}=\frac{\lambda_{1}}{\overline{\lambda}_{2k}} , k=1,\ldots, (n-3)/2\right\}$$
$$A_{2}:=\left\{(\lambda_{1},\ldots, \lambda_{n-2}) \in \Omega_{n}: \lambda_{1} \in (0,1), \; \lambda_{2k+1}=\frac{\lambda_{1}}{\overline{\lambda}_{2k}} , k=1,\ldots, (n-3)/2\right\}$$
$$A_{3}:=\left\{(\lambda_{1},\ldots, \lambda_{n-2}) \in \Omega_{n}: \lambda_{1} \in (1,\infty), \; \lambda_{2k+1}=\frac{\lambda_{1}}{\overline{\lambda}_{2k}} , k=1,\ldots, (n-3)/2\right\}$$

The component $A_{1}$ corresponds to the imaginary reflection case and the others two, $A_{2}$ and $A_{3}$, to the reflection one. The automorphism
$L(\lambda_{1},\ldots,\lambda_{n-2})=(\lambda_{1}^{-1},\lambda_{2}^{-1},\ldots, \lambda_{n-2}^{-1}) \in {\mathbb G}_{n}$ normalizes $S$ and permutes $A_{2}$ with $A_{3}$.
We may observe that inside each $A_{j}$ there are points with all  of its coordinates being real, in particular, $A_{j} \cap {\rm Fix}(J) \neq \emptyset$. It follows, from Proposition \ref{novacio}, that $\pi_{n}({\rm Fix}(S))$ consists of exactly two real analytic submanifolds $\pi_{n}(A_{1})$ and $\pi_{n}(A_{2})=\pi_{n}(A_{3})$, each one of dimension $n-2$, each one intersecting ${\mathcal F}_{0}$.

\end{rema}

\begin{propo}\label{reales}
Let $S=\Theta_{n}(\sigma) \circ J$ be a symmetry of $\Omega_{n}$, where $n \geq 4$, and let $\beta \in \{0,1,\ldots,[(n+1)/2]\}$ be such that $\sigma$ is the product of $\beta$ transpositions.

\begin{enumerate}
\item If $2\beta \neq n+1$ and $F_{1}$ and $F_{2}$ are any two connected components of the locus of fixed points of $S$, then there is an element $L \in {\mathbb G}_{n}$, normalizing $S$, such that $L(F_{1})=F_{2}$. In particular, the locus ${\mathcal F}_{\beta}:=\pi_{n}({\rm Fix}(S))$ is a connected real orbifold of dimension $n-2$.

\item If $2\beta=n+1$, then ${\rm Fix}(S)$ consists of three connected components, $A_{1}$, $A_{2}$ and $A_{3}$ (as described in Remark \ref{puntosfijos}). There is an element $L \in {\mathbb G}_{n}$, of order two and normalizing $S$, permuting  the two components $A_{2}$ and $A_{3}$. There is no element of ${\mathbb G}_{n}$ that normalizes $S$ and sending $A_{1}$ to  any of the other two. Each $A_{j}$ intersects ${\rm Fix}(J)$. In particular, $\pi_{n}({\rm Fix}(S))$ consists of two connected real orbifolds of dimension $n-2$, say $\pi_{n}(A_{1})$ and ${\mathcal F}_{(n+1)/2}:=\pi_{n}(A_{2})=\pi_{n}(A_{3})$, each of them intersection ${\mathcal F}_{0}$.

\item If $n \geq 5$ is odd, then $\pi_{n}(A_{1}) \cap {\mathcal F}_{\beta} \neq \emptyset$.
\end{enumerate}
\end{propo}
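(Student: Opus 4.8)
The plan is to analyze each of the three cases separately, exploiting the explicit coordinate descriptions of the fixed loci obtained in Proposition \ref{novacio} and Remark \ref{puntosfijos}, and the key principle that two connected components of a fixed locus are identified in $\pi_n$ whenever some element of ${\mathbb G}_n$ normalizing $S$ carries one to the other.

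For part (1), assuming $2\beta \neq n+1$, I would first recall from Remark \ref{puntosfijos} that the symmetry $S = \Theta_n(\sigma)\circ J$ has a fixed locus whose connected components are indexed by the cyclic orderings of the $n+1$ marked points $p_1,\dots,p_{n+1}$ along the fixed circle (or real line) of the relevant reflection, subject to the constraint that the reflection fixes exactly the $\sigma$-fixed marked points and the transpositions of $\sigma$ record which points are exchanged across the circle. Given two such components $F_1, F_2$ corresponding to two admissible orderings, I would produce a permutation $\rho \in {\mathfrak S}_{n+1}$ that (i) commutes with $\sigma$ (so that $\Theta_n(\rho)$ normalizes $S$, using that $J$ is central) and (ii) carries the first ordering to the second. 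Concretely, $\rho$ should permute the blocks of $\sigma$ among themselves and permute the $\sigma$-fixed symbols among themselves, in the pattern dictated by the two orderings; since the centralizer of $\sigma$ in ${\mathfrak S}_{n+1}$ is exactly the group of such block/fixed-point permutations (together with the flips within transpositions, which act trivially here), this is always possible. Then $L = \Theta_n(\rho)$ does the job, and connectivity of ${\mathcal F}_\beta = \pi_n({\rm Fix}(S))$ follows; that it is a real orbifold of dimension $n-2$ is immediate from Proposition \ref{novacio}. The main subtlety here is bookkeeping: verifying that an arbitrary admissible ordering can be brought to a standard one by a centralizing permutation, which amounts to checking that the $S_4$/$A_5$/dihedral exotic cases that caused trouble for the \emph{branch} locus do not obstruct anything here, because a reflection is forced to realize only the ``linear'' orderings.

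For part (2), with $2\beta = n+1$ (so $n$ is odd), the three components $A_1, A_2, A_3$ are exactly as displayed in Remark \ref{puntosfijos}: $A_1$ comes from an imaginary reflection $z\mapsto -1/\bar z$ and $A_2, A_3$ from honest reflections, distinguished by whether $\lambda_1 \in (0,1)$ or $\lambda_1 \in (1,\infty)$. The explicit map $L(\lambda_1,\dots,\lambda_{n-2}) = (\lambda_1^{-1},\dots,\lambda_{n-2}^{-1})$ is checked directly to normalize $S$, to have order two, and to send $\lambda_1 \mapsto \lambda_1^{-1}$, hence to swap $A_2$ and $A_3$ while preserving $A_1$. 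The nontrivial assertion is that \emph{no} element of ${\mathbb G}_n$ normalizing $S$ sends $A_1$ to $A_2$ or $A_3$: I would argue this is a conjugacy-invariant distinction — an element $R$ of ${\rm PSL}_2({\mathbb C})$-conjugation type that, together with $S$, would have to conjugate an imaginary reflection to a reflection, but these are not conjugate in the group generated by $\langle S\rangle$ and its normalizer because the imaginary reflection is fixed-point-free on $\widehat{\mathbb C}$ whereas a reflection is not, and conjugation preserves this. More carefully, one shows that for $\lambda \in A_1$ the stabilizing antiholomorphic involution on $\widehat{\mathbb C}$ is fixed-point-free while for $\lambda \in A_2 \cup A_3$ it has a circle of fixed points, and this dichotomy is preserved by any conformal automorphism of $\widehat{\mathbb C}$, hence descends to show $\pi_n(A_1) \neq \pi_n(A_2)$. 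That each $A_j$ meets ${\rm Fix}(J)$ is checked by exhibiting a point with all real coordinates in each (e.g. choosing the $\lambda_{2k}$ real so $\lambda_{2k+1} = \lambda_1/\lambda_{2k}$ is also real), giving the intersection with ${\mathcal F}_0$.

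For part (3), assuming $n\geq 5$ odd, I want $\pi_n(A_1) \cap {\mathcal F}_\beta \neq \emptyset$ for every $\beta$ with $2\beta < n+1$; equivalently, there is a marked sphere carrying both an imaginary-reflection-type anticonformal involution of the $A_1$-shape and a reflection-type symmetry with $\beta$ transpositions. I would build such a configuration directly: start from a point of $A_1$ (so ${\mathcal C}_\lambda$ is invariant under the imaginary reflection $\iota: z\mapsto -1/\bar z$, which has no fixed points but preserves the unit circle setwise swapping the two discs), and then impose in addition invariance under a reflection $r$ whose fixed circle passes through the appropriate number of marked points. Since $\iota$ and a reflection $r$ along, say, the imaginary axis generate a group isomorphic to $C_2^2$ (they commute: $r(z)=-\bar z$ and $\iota r = r\iota$ both equal $z\mapsto 1/z$ composed appropriately — I would verify the precise pair), one can choose the marked points in $\langle \iota, r\rangle$-orbits: points on the fixed circle of $r$, plus $\iota$-orbit pairs placed symmetrically. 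Arranging the cardinalities so that $r$ fixes exactly $n+1-2\beta$ of the marked points produces a point lying in $A_1$ and in ${\rm Fix}(S')$ for a symmetry $S'$ conjugate to the $\beta$-transposition model, whence the projections intersect. The main obstacle across all three parts is part (2)'s non-equivalence claim: it requires cleanly isolating the conjugacy-invariant (fixed-point-free versus not) that separates imaginary reflections from reflections and confirming that passing to $\Omega_n$ and its automorphism group ${\mathbb G}_n$ preserves this — but once phrased in terms of the induced antiholomorphic involution on the marked sphere itself, rather than on $\Omega_n$, it becomes transparent.
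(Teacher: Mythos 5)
Your proposal is correct and follows essentially the same route as the paper: part (1) via elements $\Theta_n(\rho)$ with $\rho$ centralizing $\sigma$ (the paper's explicit permutations $\tau=(1)(2)(3)\widehat{\tau}$, etc.) permuting the ordering-indexed components of ${\rm Fix}(S)$; part (2) via the explicit $L(\lambda)=(\lambda_1^{-1},\ldots,\lambda_{n-2}^{-1})$ together with the conjugation-invariant distinction between the imaginary reflection (fixed-point-free) underlying $A_1$ and the reflections underlying $A_2,A_3$, as in Remark \ref{puntosfijos}(3); and part (3) via a Klein group $C_2^2$ generated by an imaginary reflection and a commuting reflection (the paper's $\langle U(z)=-1/\overline{z},\,V(z)=1/\overline{z}\rangle$) and a suitable invariant configuration of $n+1$ points. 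No gaps beyond the level of detail the paper itself leaves implicit.
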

\begin{proof}
Up to conjugation, we may assume $S$ to be as in \eqref{formaS}.
Part (1), for the case $\beta=0$ (respectively, part (2)) was already observed in part (1) (respectively, part (3)) of Remark \ref{puntosfijos}. 

Let us prove part (1) for $\beta >0$. In the case $\beta=1$, we may see that the different connected components of ${\rm Fix}(S)$ correspond to the many different ways to display the values $\lambda_{1},\ldots, \lambda_{n-2}$ in the unit circle. But, by considering permutations of the form $\tau=(1)(2)(3)\widehat{\tau} \in {\mathfrak S}_{n+1}$, we may see that $\Theta_{n}(\tau)$ normalises the symmetry $S$ and permutes these connected components.  The situation is similar for cases $\beta=2$ and $\beta \geq 3$. In the first case we need to use the permutations of the form $\tau=(1)(2)(3)(4)\widehat{\tau}, \tau=(1)(2)(3,4)\widehat{\tau} \in {\mathfrak S}_{n+1}$ and in the second one case we need to use permutations of the form $\tau=(1)(2)(3)(4) \tau_{1} \tau_{2}, \tau=(1)(2)(3,4) \tau_{1} \tau_{2} \in {\mathfrak S}_{n+1}$, where $\tau_{1}$ is the identity permutation on the set $\{5,\ldots,2\beta\}$ and $\tau_{2}$ a permutation on the set $\{2\beta+1,\ldots,n+1\}$. 

Part (3) can be checked just by considering the Klein group $G=\langle U(z)=-1/\overline{z}, V(z)=1/\overline{z}\rangle \cong C_{2}^{2}$. Then we only need to observe that it is possible to find a $G$-invariant collection of $n+1$ points with the property that $n+1-2\beta$ are fixed under the reflection $V$ and the other $2\beta$ are permuted under it. So the result follows from the fixed point description in Remark \ref{puntosfijos}.
\end{proof}

By Proposition \ref{reales} we observe the following. Let $S=\Theta_{n}(\sigma) \circ J$ be a symmetry of $\Omega_{n}$ and $\beta \in \{0,1,\ldots, [(n+1)/2\}$ as above.
\begin{enumerate}
\item If $2\beta \neq n+1$,  then ${\mathcal F}_{\beta}=\pi_{n}({\rm Fix}(S))$ is connected. 

\item If $n \geq 5$ is odd and $2\beta=n+1$, then ${\mathcal F}_{(n+1)/2}:=\pi_{n}(A_{2})=\pi_{n}(A_{3})$ and $\pi_{n}(A_{1})$ are both connected, they intersect and $\pi_{n}({\rm Fix}(S))={\mathcal F}_{(n+1)/2} \cup \pi_{n}(A_{1})$.  

\item If $n \geq 4$ is even, then the real locus ${\mathcal M}_{0,[n+1]}^{\mathbb R}$  is the union the $[(n+3)/2]$ connected real orbifolds ${\mathcal F}_{\beta}$, where $\beta\in \{0,1,\ldots,[(n+1)/2]\}$.

\item If $n \geq 5$ is odd, then the real locus is the union the $(n+1)/2$ connected real orbifolds ${\mathcal F}_{\beta}$, where $\beta\in \{0,1,\ldots,(n+1)/2\}$, together the extra one $\pi_{n}(A_{1})$. The component ${\mathcal F}_{0}$ intersects both ${\mathcal F}_{(n+1)/2}$ and $\pi_{n}(A_{1})$ and, moreover, $\pi_{n}(A_{1})$ intersects all the other ones.
\end{enumerate}

The above asserts that in order to study the connectivity of the real locus, we only need to study the possible intersections between the components
${\mathcal F}_{\beta}$ (for $n$ odd we must also consider the extra component $\pi_{n}(A_{1})$). We call all these sets 
the  ``irreducible" components of ${\mathcal M}_{0,[n+1]}^{\mathbb R}$. The following result provides conditions for two of the irreducible components ${\mathcal F}_{\beta_{1}}$ and ${\mathcal F}_{\beta_{2}}$ to intersect.

\begin{propo}\label{interseccion}
Let $\beta_{1}, \beta_{2} \in \{0,1,\ldots,[(n+1)/2]\}$, $\beta_{1} \neq \beta_{2}$. Then 
${\mathcal F}_{\beta_{1}} \cap {\mathcal F}_{\beta_{2}} \neq \emptyset$  if and only if there are integers $m\geq 1$ and $\gamma \in \{0,1,2\}$ such that  
\begin{equation}\label{formula1}
2m(\beta_{1}+\beta_{2})=(2m-1)(n+1-\gamma).
\end{equation}  
\end{propo}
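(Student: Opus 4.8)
The plan is to translate the statement ${\mathcal F}_{\beta_1}\cap{\mathcal F}_{\beta_2}\neq\emptyset$ into a purely combinatorial question about configurations of $n+1$ points on the Riemann sphere carrying two commuting (extended) Möbius symmetries, and then solve that combinatorial question. First I would recall from Remark \ref{puntosfijos} that a point $[\lambda]$ lies in ${\mathcal F}_{\beta_i}$ exactly when the set ${\mathcal C}_\lambda$ of $n+1$ marked points admits a reflection $r_i$ (an extended Möbius transformation conjugate to $z\mapsto\bar z$) whose fixed circle contains exactly $n+1-2\beta_i$ of the marked points, the remaining $2\beta_i$ being interchanged in pairs by $r_i$. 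So $[\lambda]\in{\mathcal F}_{\beta_1}\cap{\mathcal F}_{\beta_2}$ means ${\mathcal C}_\lambda$ is invariant under two reflections $r_1,r_2$ with these respective fixed-point counts. Since $\langle r_1,r_2\rangle$ is a finite subgroup of the extended Möbius group containing two reflections, it is a dihedral-type group: $r_1r_2$ is a rotation of some finite order $m\ge1$ (or a product of reflections in two parallel-type circles; finiteness forces the rotation picture), and $\langle r_1,r_2\rangle$ is the $(2m)$-element group generated by reflections in two circles meeting at angle $\pi/m$. The existence of such a configuration then reduces to counting orbits of the marked points under this group and matching the count of fixed-vs-moved points to $\beta_1,\beta_2$.

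The heart of the argument is the counting. I would set $G=\langle r_1,r_2\rangle$, let $\ell_1,\ell_2$ be the two fixed circles (which are the mirrors of $r_1,r_2$), and analyse how $G$ partitions $\widehat{\mathbb C}$: a generic point has a free orbit of size $2m$; points on $\ell_1$ but not at the two intersection points of $\ell_1\cap\ell_2$ have orbits of size $m$ (stabilizer generated by $r_1$); similarly for $\ell_2$; and the $\gamma$ intersection points $\ell_1\cap\ell_2$ (where $\gamma\in\{0,1,2\}$ — two if the circles genuinely cross, and the degenerate values $0,1$ covering the cases where they are disjoint or tangent) have stabilizer all of $G$ or a proper dihedral subgroup and form special short orbits. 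Writing the $n+1$ marked points as a disjoint union of such orbits — say $k$ free orbits, some orbits on $\ell_1\setminus\ell_2$, some on $\ell_2\setminus\ell_1$, plus the $\gamma$ fixed points of $G$ — and then bookkeeping which points $r_1$ fixes (those on $\ell_1$, including the $\gamma$ special ones) versus moves, and likewise for $r_2$, will produce two linear equations. Counting points fixed by $r_1$ gives $n+1-2\beta_1=\gamma+m\cdot a_1$ for the number $a_1$ of $\ell_1$-orbits, and points moved by $r_1$ that lie on $\ell_2$ or are free contribute to $2\beta_1$; doing the symmetric computation for $r_2$ and eliminating the orbit counts, using $n+1=2mk+m a_1+m a_2+\gamma$, should collapse exactly to the Diophantine relation $2m(\beta_1+\beta_2)=(2m-1)(n+1-\gamma)$. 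I expect the cleanest route is: from $n+1-2\beta_1=\gamma+ma_1$ and $n+1-2\beta_2=\gamma+ma_2$ we get $2(n+1)-2(\beta_1+\beta_2)=2\gamma+m(a_1+a_2)$, and from the total $n+1=2mk+m(a_1+a_2)+\gamma$ we get $m(a_1+a_2)=n+1-\gamma-2mk$; substituting yields $2(n+1-\beta_1-\beta_2)=2\gamma+(n+1-\gamma)-2mk$, i.e. $2m(\beta_1+\beta_2)$ matches $(2m-1)(n+1)+(\dots)\gamma$ — I would carry the constant through carefully to land on \eqref{formula1}.

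For the converse direction, given $m\ge1$, $\gamma\in\{0,1,2\}$ satisfying \eqref{formula1}, I would explicitly build a configuration: take the group $G$ generated by reflections in two circles meeting at angle $\pi/m$, place $\gamma$ marked points at the intersection points (or handle $\gamma=0,1$ by the tangent/disjoint degenerations), distribute the remaining points into orbits on $\ell_1$, on $\ell_2$, and free orbits in the required numbers $a_1,a_2,k\ge0$ — \eqref{formula1} together with the parity/divisibility of $n+1-\gamma$ by the relevant quantities is precisely what guarantees non-negative integer solutions exist — normalize by a Möbius map so that three of the points become $\infty,0,1$, and read off $\lambda\in\Omega_n$. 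One then checks $[\lambda]\in{\mathcal F}_{\beta_1}\cap{\mathcal F}_{\beta_2}$ directly from Remark \ref{puntosfijos}. The main obstacle is the bookkeeping in the forward direction: one must be scrupulous about the special orbits at $\ell_1\cap\ell_2$ and about the three degenerate shapes of $\langle r_1,r_2\rangle$ (crossing circles, tangent circles, disjoint circles), since these are exactly what the parameter $\gamma$ encodes, and a miscount there changes the constant in \eqref{formula1}; organizing the argument around the orbit-counting identity $n+1=2mk+m(a_1+a_2)+\gamma$ and resisting the temptation to split into many sub-cases prematurely is what will keep it clean.
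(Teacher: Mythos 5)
Your overall strategy is the same as the paper's: use Remark \ref{puntosfijos} to translate $[\lambda]\in{\mathcal F}_{\beta_1}\cap{\mathcal F}_{\beta_2}$ into the existence of two reflections $\tau_1,\tau_2$ preserving ${\mathcal C}_\lambda$ with $n+1-2\beta_i$ marked points on their mirrors, observe that $G=\langle\tau_1,\tau_2\rangle$ is a finite dihedral group, and count orbits. But as written your bookkeeping has concrete errors. First, the identity ``$n+1-2\beta_1=\gamma+m a_1$'' is false: a $G$-orbit of a non-special point of $\ell_1$ has $|G|/2$ points, but only \emph{two} of them lie on $\ell_1$ itself; the others sit on the remaining mirrors of the same conjugacy class of reflections. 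The correct relations (the ones the paper writes) are $n+1-2\beta_i=2\delta_i+\gamma$ together with $2\beta_1=2m\delta_2+(2m-2)\delta_1$ and $2\beta_2=2m\delta_1+(2m-2)\delta_2$, where $\delta_i$ counts the orbits meeting $\ell_i$. Second, you never use $\beta_1\neq\beta_2$. This hypothesis forces $\tau_1,\tau_2$ to be non-conjugate inside the stabilizer of ${\mathcal C}_\lambda$; since in a dihedral group with rotation of odd order all reflections are conjugate, the rotation $\tau_2\circ\tau_1$ must have even order $2m$, and this evenness is exactly where the coefficients $2m$ and $2m-1$ of \eqref{formula1} come from. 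With an unrestricted rotation order and your counts, the relation you would ``land on'' is not \eqref{formula1}. A minor point: the tangent/disjoint degenerations you allow for $\gamma\in\{0,1\}$ do not occur; finiteness forces $\tau_2\circ\tau_1$ to be elliptic, so the two mirrors always meet in exactly two points, and $\gamma$ simply records how many of those two points belong to ${\mathcal C}_\lambda$.

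The more serious issue is the free orbits, which you explicitly include but cannot then eliminate. Keeping $k\geq 1$ free orbits in the count, the elimination you sketch does not collapse: with your (mis)counted identities one gets $2(\beta_1+\beta_2)=(n+1-\gamma)+2mk$, and with the corrected per-orbit counts one gets $2m(\beta_1+\beta_2)=(2m-1)(n+1-\gamma)+4mk$; no careful handling of constants makes the $k$-term disappear. The paper's proof sidesteps this because its relation $(**)$ tacitly assumes that every marked point lies on one of the two mirrors or at one of the two fixed points of the rotation, i.e.\ that there are no free $G$-orbits in ${\mathcal C}_\lambda$; to follow this route you must either make that assumption explicit and justify it in the characterization of ${\mathcal F}_{\beta_1}\cap{\mathcal F}_{\beta_2}$, or explain separately how configurations containing free orbits are handled. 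As it stands, your plan establishes the ``if'' direction (your explicit construction, done with $k=0$ and the corrected counts, is essentially the paper's), but the ``only if'' direction is not proved by your sketch.
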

\begin{proof}
Let us start noting that ${\mathcal F}_{\beta_{1}} \cap {\mathcal F}_{\beta_{2}} \neq \emptyset$ is equivalent (see Remark \ref{puntosfijos}) to have a point $\lambda\in \Omega_{n}$ such that the set ${\mathcal C}_{\lambda}$ is invariant under two reflections, $\tau_{1}$ and $\tau_{2}$, which are non-conjugated by a M\"obius transformation keeping invariant the collection ${\mathcal C}_{\lambda}$ and
\begin{enumerate}
\item[(i)] $\tau_{1}$ fixes pointwise $n+1-2\beta_{1}$ of the points and permutes $2\beta_{1}$ of them;
\item[(ii)] $\tau_{2}$ fixes pointwise $n+1-2\beta_{2}$ of the points and permutes $2\beta_{2}$ of them.
\end{enumerate}

The group $G=\langle \tau_{1}, \tau_{2} \rangle$ is a subgroup of the stabilizer of ${\mathcal C}_{\lambda}$, so it is a finite group; in fact a 
dihedral group of order $2r$, where $r$ is the order of $\tau_{2} \circ \tau_{1}$. 
As $\tau_{1}$ and $\tau_{2}$ are assumed to be non-conjugated, necessarilly $r=2m$, for some $m \geq 1$. 
In this way, there must be non-negative integers $\delta_{1}$ and $\delta_{2}$ and $\gamma \in \{0,1,2\}$, such that on the circle of fixed points of $\tau_{1}$ there are $2\delta_{1}+\gamma$ of the points of ${\mathcal C}_{\lambda}$ and on the circle of fixed points of $\tau_{2}$ we must see $2\delta_{2}+\gamma$ of points of that set, that is (from  first parts of (i) and (ii) above),
$$(*) \quad n+1-2\beta_{1}=2\delta_{1}+\gamma, \; n+1-2\beta_{2}=2\delta_{2}+\gamma,$$
and (from the second part of (i) and (ii)) that
$$(**) \quad 2\beta_{1}=2m\delta_{2}+(2m-2)\delta_{1}, \; 2\beta_{2}=2m\delta_{1}+(2m-2)\delta_{2}.$$

Equalities in $(*)$ impliy that
$$2 \delta_{1}=n+1-2\beta_{1}-\gamma, \; 2 \delta_{2}=n+1-2\beta_{2}-\gamma.$$

Plugging these in the equalities in $(**)$, we obtain the desired result.
\end{proof}

\begin{rema}\label{observa}
For equation \eqref{formula1} to have a solution, necessarily $n+1-\gamma$ must be divisible by $2m$, in particular: (i) for $n$ even, we have $\gamma=1$ and $m$ a divisor of $n/2$, and (ii) for $n$ odd, we have $\gamma \in \{0,2\}$ and $m$ a divisor of $(n+1-\gamma)/2$.  So, for instance,
(1) ${\mathcal F}_{0} \cap {\mathcal F}_{1} = \emptyset$, for  $n \geq 4$, (2) ${\mathcal F}_{1} \cap {\mathcal F}_{2} \neq \emptyset$, if and only if $n\in\{4,5,6,7\}$ and (3)  ${\mathcal F}_{0} \cap {\mathcal F}_{\beta} \neq \emptyset$, if and only if  $\beta \in \{n-1,n,n+1\}$.
\end{rema}

\subsection{${\mathcal M}_{0,[n+1]}^{\mathbb R}$ is connected for $n \geq 5$ odd}

\begin{propo}\label{nimpar}
If $n \geq 5$ is odd, then ${\mathcal M}_{0,[n+1]}^{\mathbb R}$ is connected.
\end{propo}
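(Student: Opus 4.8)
The plan is to use the extra irreducible component $\pi_{n}(A_{1})$ (the one coming from the special symmetry with $2\beta=n+1$) as a ``hub'' that meets every other irreducible component, so that the whole real locus appears at once as a union of connected sets sharing a common connected piece.

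Concretely, recall from the discussion following Proposition \ref{reales} that for $n\geq 5$ odd the real locus ${\mathcal M}_{0,[n+1]}^{\mathbb R}$ is the union of the irreducible components ${\mathcal F}_{\beta}$, $\beta\in\{0,1,\ldots,(n+1)/2\}$, together with $\pi_{n}(A_{1})$, and that each of these pieces is connected (by Proposition \ref{reales}, with $\pi_{n}(A_{1})$ being the $\pi_{n}$-image of the connected set $A_{1}$). The first step is then to check that $\pi_{n}(A_{1})$ meets ${\mathcal F}_{0}$: this is immediate from part (2) of Proposition \ref{reales}, since $A_{1}\cap{\rm Fix}(J)\neq\emptyset$ forces $\pi_{n}(A_{1})\cap{\mathcal F}_{0}\neq\emptyset$. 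The second step is to check that $\pi_{n}(A_{1})$ meets ${\mathcal F}_{\beta}$ for every $\beta\geq 1$, which is exactly part (3) of Proposition \ref{reales}: there one produces, via the fixed-point description in Remark \ref{puntosfijos}, a configuration ${\mathcal C}_{\lambda}$ invariant under the Klein group $\langle z\mapsto -1/\overline{z},\ z\mapsto 1/\overline{z}\rangle$ in which $n+1-2\beta$ of the points are fixed by the reflection and $2\beta$ are permuted by it, so that the corresponding $\lambda$ lies simultaneously in (a conjugate of) $A_{1}$ and in (a conjugate of) the fixed locus of the $\beta$-type symmetry.

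Finally I would invoke the elementary topological fact that a union $Y\cup\bigcup_{i}Z_{i}$ of connected subsets of a topological space in which the connected set $Y$ meets each $Z_{i}$ is connected, applied with $Y=\pi_{n}(A_{1})$ and the $Z_{i}$ running over the ${\mathcal F}_{\beta}$; this yields the connectivity of ${\mathcal M}_{0,[n+1]}^{\mathbb R}$. I do not expect a genuine obstacle at this stage: all the real content has already been extracted in Propositions \ref{novacio} and \ref{reales} and in Remark \ref{puntosfijos}, and the present statement is essentially their bookkeeping consequence. The only point deserving a word of care is that the intersections are to be understood downstairs in $\Omega_{n}/{\mathbb G}_{n}$, so in each step it suffices to exhibit a single $\lambda\in\Omega_{n}$ fixed by the two relevant symmetries (up to conjugation in ${\mathbb G}_{n}$), which is precisely what the cited constructions deliver.
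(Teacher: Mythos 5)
Your argument is correct, but it follows a different route from the paper's. The paper's proof of Proposition \ref{nimpar} is built on the arithmetic intersection criterion of Proposition \ref{interseccion}: taking $m=1$ and $\gamma=2$ in \eqref{formula1} gives ${\mathcal F}_{\beta}\cap{\mathcal F}_{(n-1)/2-\beta}\neq\emptyset$, and taking $m=1$ and $\gamma=0$ gives ${\mathcal F}_{(n-1)/2-\beta}\cap{\mathcal F}_{\beta+1}\neq\emptyset$, so consecutive components ${\mathcal F}_{\beta}$ and ${\mathcal F}_{\beta+1}$ are chained through a third one; only at the very end is $\pi_{n}(A_{1})$ attached via ${\mathcal F}_{0}$ (with the stronger hub property mentioned only parenthetically). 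You instead bypass Proposition \ref{interseccion} entirely and rest the whole proof on part (3) of Proposition \ref{reales} (proved via the Klein group $\langle z\mapsto -1/\overline{z},\,z\mapsto 1/\overline{z}\rangle$ and Remark \ref{puntosfijos}), which makes $\pi_{n}(A_{1})$ a connected set meeting every ${\mathcal F}_{\beta}$, plus part (2) for the intersection with ${\mathcal F}_{0}$; the elementary fact about unions of connected sets sharing a common connected piece then finishes it. Both arguments are sound and both ingredients are already established in the paper, so there is no gap; your version is shorter and cleaner for the odd case, but note what each buys: the paper's chain shows that the union of the ${\mathcal F}_{\beta}$ alone is connected by the same mechanism (the graph ${\mathcal G}_{n}$ of \eqref{formula1}) that later governs the even case and the counting of components in Propositions \ref{nparporimpar} and \ref{caso4p}, whereas your argument hinges entirely on the component $\pi_{n}(A_{1})$, which exists only when $n$ is odd and so does not transfer to the rest of the analysis.
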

\begin{proof}
If $\beta \in \{0,\ldots,(n-1)/2\}$, then $(n-1)/2-\beta \in \{0,\ldots,(n-1)/2\}$ and , by using $m=1$ and $\gamma=2$ in \eqref{formula1}, we obtain that 
${\mathcal F}_{\beta} \cap {\mathcal F}_{(n-1)/2-\beta} \neq \emptyset$. Now, by using $m=1$ and $\gamma=0$, we obtain that 
${\mathcal F}_{(n-1)/2-\beta} \cap {\mathcal F}_{\beta+1} \neq \emptyset$. In this way, we may connect using two edges the vertices ${\mathcal F}_{\beta}$ and ${\mathcal F}_{\beta+1}$, for $\beta \in \{0,\ldots,(n-1)/2\}$. Since the component $\pi_{n}(A_{1})$ intersects ${\mathcal F}_{0}$ (in fact, it intersects all the other irreducible components), we obtain the connectivity of ${\mathcal M}_{0,[n+1]}^{\mathbb R}$.
\end{proof}

\subsection{${\mathcal M}_{0,[n+1]}^{\mathbb R}$ is usually non-connected for $n \geq 4$ even}
In the case $n \geq 4$ even, the connectivity of ${\mathcal M}_{0,[n+1]}^{\mathbb R}$ is described by the  
intersection graph ${\mathcal G}_{n}$ of ${\mathcal M}_{0,[n+1]}^{\mathbb R}$, whose set $V_{n}$ of vertices are the values $\beta \in \{0,1,\ldots, [(n+1)/2]\}$. 
Two different vertices $\beta_{1}, \beta_{2} \in V_{n}$ are joined by an edge if the irreducible components ${\mathcal F}_{\beta_{1}}$ and ${\mathcal F}_{\beta_{2}}$ intersect. The intersection graph ${\mathcal G}_{n}$ describes how the different irreducible components intersect.
Proposition \ref{interseccion} states necessary and sufficient conditions for two different irreducible components to intersect, in particular, it permits to describe the edges of the graph intersection ${\mathcal G}_{n}$.  Some of these graphs are despicted in Figure \ref{n=5}.

\begin{figure}[h]
\begin{center}
\subfigure[${\mathcal G}_{6}$]{\includegraphics[width=3.5cm]{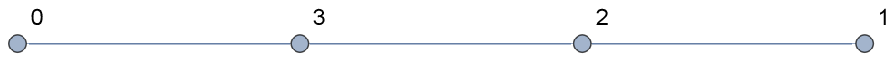}}\quad\quad
\subfigure[${\mathcal G}_{10}$]{\includegraphics[width=3.5cm]{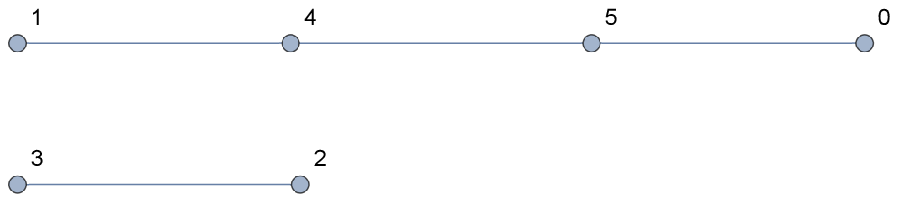}}\quad\quad
\subfigure[${\mathcal G}_{28}$]{\includegraphics[width=3.5cm]{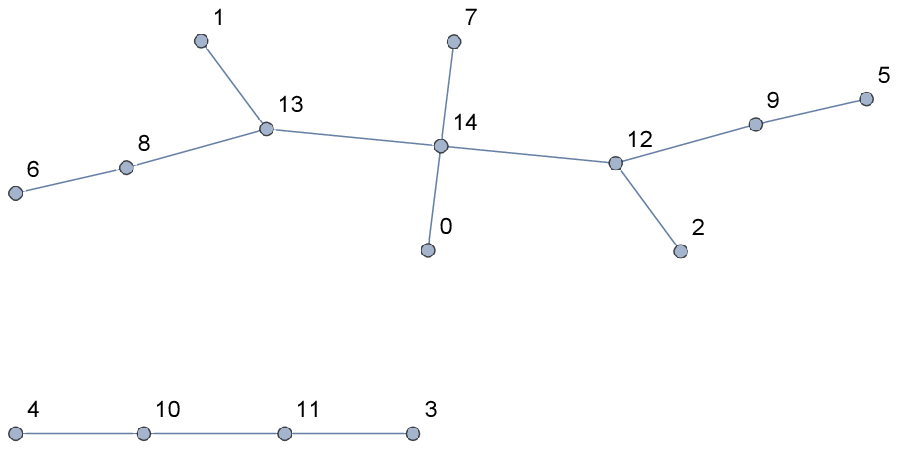}}
\caption{Intersection graphs ${\mathcal G}_{6}, {\mathcal G}_{10}$ and  ${\mathcal G}_{28}$}\label{n=5}
\end{center}
\end{figure}

\begin{propo}[Non-connectedness for $n=2r \geq 4$, $r$ odd]\label{nparporimpar}
If $n=2r$, where $r \geq 5$ is an odd integer, then ${\mathcal M}_{0,[n+1]}^{\mathbb R}$ is not  connected. Moreover, for $r=p$, where $p$ is a prime integer, the real locus ${\mathcal M}_{0,[2p+1]}^{\mathbb R}$ has exactly $(p-1)/2$ connected components.
\end{propo}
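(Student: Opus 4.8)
The plan is to reduce the statement to a purely combinatorial question about the intersection graph ${\mathcal G}_{n}$ and then describe its connected components by hand. Since $n=2r$ is even we have $[(n+1)/2]=r$, and $n+1$ is odd, so the exceptional case $2\beta=n+1$ of Proposition \ref{reales} does not occur; hence ${\mathcal M}_{0,[n+1]}^{\mathbb R}$ is the union of the connected irreducible components ${\mathcal F}_{0},\ldots,{\mathcal F}_{r}$, and it is connected if and only if ${\mathcal G}_{n}$ is. First I would specialize Proposition \ref{interseccion}: by Remark \ref{observa} one is forced to take $\gamma=1$ and $m\mid r$, and \eqref{formula1} then collapses to $\beta_{1}+\beta_{2}=2r-r/m$. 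Since $r/m$ runs over all divisors of $r$, the conclusion is that two distinct vertices $\beta_{1},\beta_{2}\in\{0,\ldots,r\}$ are joined by an edge in ${\mathcal G}_{n}$ precisely when the positive integer $2r-\beta_{1}-\beta_{2}$ divides $r$. This translation, rather than anything that follows, is the step that needs care.

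Next I would isolate the combinatorial core. Let $p$ be the smallest prime factor of $r$, so that $r/p$ is the largest proper divisor of $r$ and $r$ is the only divisor of $r$ lying in the interval $(r/p,r]$. Fix any integer $\beta$ with $r/p<\beta<r-r/p$. If $\beta'$ is a neighbour of $\beta$ in ${\mathcal G}_{n}$, then $d:=2r-\beta-\beta'$ divides $r$ and, because $\beta'\le r$, satisfies $d\ge r-\beta>r/p$; hence $d=r$ and $\beta'=r-\beta$. As $r-\beta$ also lies in that interval, it too has $\beta$ as its unique neighbour, and $\beta\ne r-\beta$ because $r$ is odd. Thus $\{\beta,r-\beta\}$ is a connected component of ${\mathcal G}_{n}$ consisting of a single edge, and, since $2\le\beta<r$, this component does not contain the vertex $0$.

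It remains to exhibit one integer $\beta$ with $r/p<\beta<r-r/p$ whenever $r\ge5$ is odd. If $r=p$ is prime, then $r/p=1$ and, since $p\ge5$, the value $\beta=2$ works. If $r$ is composite, then $p\le\sqrt{r}$, so $r/p\ge p$ and the interval $(r/p,r-r/p)$ has length $r(p-2)/p\ge r/3>1$, hence contains an integer. In either case ${\mathcal G}_{n}$ has at least two connected components, so ${\mathcal M}_{0,[n+1]}^{\mathbb R}$ is disconnected.

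Finally, for $r=p$ prime I would compute exactly. The divisors of $p$ are $1$ and $p$, so $\{\beta_{1},\beta_{2}\}$ is an edge iff $\beta_{1}+\beta_{2}\in\{p,2p-1\}$; the edges are therefore $\{p-1,p\}$ together with the pairs $\{j,p-j\}$ for $0\le j\le(p-1)/2$. A direct inspection shows that the component of $0$ is exactly $\{0,1,p-1,p\}$ (joined via the path $0\sim p\sim(p-1)\sim 1$, and closed under taking neighbours), while each of the remaining $p-3$ vertices $\beta\in\{2,\ldots,p-2\}$ has the single neighbour $p-\beta$ in that same set; these pair off into $(p-3)/2$ components of size two. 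Hence ${\mathcal G}_{n}$ has $1+(p-3)/2=(p-1)/2$ connected components, which gives the stated count.
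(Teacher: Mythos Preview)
Your proof is correct and follows the same route as the paper's: reduce via Remark~\ref{observa} to $\gamma=1$, $m\mid r$, rewrite \eqref{formula1} as $\beta_{1}+\beta_{2}=2r-r/m$, and then argue combinatorially on the intersection graph; the prime case is handled identically. The one difference is in the non-connectedness step: the paper picks the single pair $\{(r-1)/2,(r+1)/2\}$ and checks directly that each of these two vertices has no other neighbour (by solving for $\beta$ and bounding $m$), whereas you prove the more general fact that every $\beta$ in the interval $(r/p,\,r-r/p)$ has $r-\beta$ as its unique neighbour, so any such $\beta$ yields a two-vertex component. Your version is slightly more informative (it exhibits many isolated edges at once), but the underlying computation---forcing the divisor $d=2r-\beta-\beta'$ to equal $r$---is the same.
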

\begin{proof} In this case $[(n+1)/2]=r$. By formula \eqref{formula1} and part (i) in Remark \ref{observa}, for $\beta_{1},\beta_{2} \in \{0,1,\ldots,r\}$, $\beta_{1} \neq \beta_{2}$, the condition ${\mathcal F}_{\beta_{1}} \cap {\mathcal F}_{\beta_{2}} \neq \emptyset$ is equivalent to have
$\beta_{1}+\beta_{2}=(2m-1)r/m$, where $m \geq 1$ is a divisor of $r$ (so $m$ must be odd). By taking $m=1$, we obtain that ${\mathcal F}_{(r-1)/2} \cap {\mathcal F}_{(r+1)/2}\neq \emptyset$. We claim that none of these two can intersect other of the components. We check this for $(r-1)/2$ as for the other the argument is similar. Assume ${\mathcal F}_{(r-1)/2}$ intersects ${\mathcal F}_{\beta}$ for some $\beta \neq (r+1)/2$. Then, there must be a divisor $m \geq 1$ of $r$ such that $(r-1)/2+\beta =(2m-1)r/m$.
It follows that $\beta=(m(3r+1)-2r)/2m$, and as $\beta \leq r$, it follows that $m \leq 2r/(r+1)<2$, a contradiction.
If $r=p$, where $p \geq 3$ is a prime, then formula  \eqref{formula1} reads as
$m(\beta_{1}+\beta_{2})=(2m-1)p$, so $m \in \{1,p\}$. In this way, ${\mathcal F}_{\beta_{1}} \cap {\mathcal F}_{\beta_{2}} \neq \emptyset$ if and only if 
$\beta_{1}+\beta_{2}\in\{p,2p-1\}$. 
Using $m=1$, we obtain that ${\mathcal F}_{\beta} \cap {\mathcal F}_{p-\beta} \neq \emptyset$, for every $\beta \in \{0,\ldots, p\}$. By using $m=p$, we obtain that ${\mathcal F}_{\beta} \cap {\mathcal F}_{2p-1-\beta} \neq \emptyset$, for $\beta \in \{p-1,p\}$. It can be seen that $\{0,p,p-1,1\}$ corresponds to one connected component of ${\mathcal M}_{0,[2p+1]}^{\mathbb R}$ and the others correspond to the sets 
$\{2,p-2\}, \{3,p-3\}, \ldots, \{(p-1)/2,(p+1)/2\}$. So, the number of connected components is exactly $(p-1)/2$.
\end{proof}

\begin{rema}
As it was mentioned by one of the referees, it is possible to state a more precise description of the connectivity of ${\mathcal M}_{0,[n+1]}^{\mathbb R}$, for $n=2r$ and $r\geq 5$ odd in a similar way as in Theorem \ref{teo3}. We leave this task to the curious reader.
\end{rema}

\begin{propo}\label{caso4p}
If $n =4p$, where $p \geq 2$ is a prime integer, then ${\mathcal M}_{0,[n+1]}^{\mathbb R}$ is not connected if and only if $p \geq 7$.
\end{propo}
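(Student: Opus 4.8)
The plan is to reduce the assertion to a finite combinatorial check on the intersection graph ${\mathcal G}_{4p}$, whose vertex set is $V_{4p}=\{0,1,\ldots,2p\}$ and which is connected if and only if ${\mathcal M}_{0,[4p+1]}^{\mathbb R}$ is. First I would read off the edge set of ${\mathcal G}_{4p}$ from Proposition \ref{interseccion} together with part (i) of Remark \ref{observa}: since $n=4p$ is even one necessarily has $\gamma=1$ and $m\mid n/2=2p$, so \eqref{formula1} says that $\{\beta_1,\beta_2\}$ is an edge exactly when $\beta_1+\beta_2=(2m-1)(2p)/m$ for some divisor $m$ of $2p$. For an odd prime $p$ the divisors of $2p$ are $1,2,p,2p$, so the admissible sums are $\beta_1+\beta_2\in\{2p,\,3p,\,4p-2,\,4p-1\}$; for $p=2$ the divisors of $4$ are $1,2,4$, giving $\beta_1+\beta_2\in\{4,6,7\}$. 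After this step the proposition is a purely arithmetic statement about these graphs.

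For the direction $p\le 5$ (so $p\in\{2,3,5\}$, $n\in\{8,12,20\}$) I would simply verify that ${\mathcal G}_{4p}$ is connected. In each case the vertex $2p$ is adjacent, through the admissible sums, to $0$, $p$, $2p-2$ and $2p-1$, and running a breadth-first search one or two further steps from $2p$ already reaches every vertex of $\{0,\ldots,2p\}$; the three verifications are short and can be displayed in full.

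For the direction $p\ge 7$ I would exhibit a proper nonempty subset of $V_{4p}$ closed under adjacency, which forces ${\mathcal G}_{4p}$ to be disconnected. Set
$$C_p=\{\,3,\ p-3,\ p+3,\ 2p-3\,\},$$
whose four elements are pairwise distinct and lie in $V_{4p}$ for $p\ge 7$; for $p=7$ this is the four-vertex path-component $\{3,4,10,11\}$ visible in Figure \ref{n=5}(c) (the graph ${\mathcal G}_{28}$). The crucial step is to compute the ${\mathcal G}_{4p}$-neighbours of each vertex of $C_p$: every element $\beta$ of $C_p$ satisfies $\beta\le 2p-3$, so $\beta$ together with another vertex $\le 2p$ cannot realize the sums $4p-2$ or $4p-1$ (these require a summand $\ge 2p-2$); hence the only edges at $\beta$ come from the sum $2p$ (neighbour $2p-\beta$) and, when $\beta\ge p$, from the sum $3p$ (neighbour $3p-\beta$). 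Running through $\beta=3,\,p-3,\,p+3,\,2p-3$ and using $p\ge 7$ one obtains exactly the three edges $\{3,2p-3\}$, $\{p+3,2p-3\}$ and $\{p-3,p+3\}$ and no others; in particular no vertex of $C_p$ is joined to a vertex outside $C_p$. Therefore $C_p$ is a union of connected components of ${\mathcal G}_{4p}$, and since $|C_p|=4<2p+1=|V_{4p}|$ it is a proper one, so ${\mathcal G}_{4p}$, and hence ${\mathcal M}_{0,[4p+1]}^{\mathbb R}$, is disconnected.

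The only delicate point is this last neighbour computation, where one must test all four admissible sums against all four vertices while respecting the constraints $0\le\beta_i\le 2p$ and $\beta_1\ne\beta_2$; it is precisely here that the hypothesis $p\ge 7$ (rather than $p\ge 3$) enters, since for $p\le 5$ the set $C_p$ either leaves $V_{4p}$, degenerates, or fails to be closed under adjacency, in agreement with the connectivity found in the previous step. Everything else is elementary arithmetic.
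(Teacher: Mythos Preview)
Your proposal is correct and follows essentially the same approach as the paper: both reduce to the intersection graph via Proposition~\ref{interseccion}, identify the divisors $m\in\{1,2,p,2p\}$ of $2p$ and the resulting admissible sums $\{2p,3p,4p-2,4p-1\}$, check the small cases $p\in\{2,3,5\}$ directly, and for $p\ge 7$ exhibit a proper adjacency-closed subset. The paper phrases the last step via ``connectivity operators'' $E_1,E_2,E_p,E_{2p}$ and notes the whole family $\{k,2p-k,p+k,p-k\}$ for $k\in\{3,\ldots,p-3\}$, whereas you pick the single instance $k=3$; for proving disconnectedness one such set suffices, so this is only a cosmetic difference.
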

\begin{proof}
If $A, B \subset \{0,\ldots,2p\}$, a map $E:A \to B$ is called a connectivity operator if for $\beta \in A$ we have that ${\mathcal F}_{\beta} \cap {\mathcal F}_{E(\beta)} \neq \emptyset$.
Formula \eqref{formula1} asserts that ${\mathcal F}_{\beta_{1}} \cap {\mathcal F}_{\beta_{2}} \neq \emptyset$, for $\beta_{1},\beta_{2} \in \{0,\ldots,2p\}$, $\beta_{1} \neq \beta_{2}$, if and only if $\beta_{1}+\beta_{2}=(2m-1)2p/m$, where $m \in \{1,2,p,2p\}$.
Each of the values of $m$ induces a connectivity operator as follows
$$\begin{array}{lll}
E_{1}&:&\beta \in \{0,1,\ldots,2p\} \mapsto 2p-\beta \in \{0,1,\ldots,2p\},\\
E_{2}&:&\beta \in \{p,p+1,\ldots,2p\} \mapsto 3p-\beta \in \{p,p+1,\ldots,2p\},\\
E_{p}&:&\beta \in \{2p-2,2p-1,2p\} \mapsto 4p-2-\beta \in \{2p-2,2p-1,2p\},\\
E_{2p}&:&\beta \in \{2p-1,2p\} \mapsto 4p-1-\beta \in \{2p-1,2p\}.
\end{array}
$$

Using the above connectivity operators, it can be checked that, for $k \in \{3,\ldots,p-3\}$ and $p \geq 7$, the vertices in $\{k,2p-k,p+k,p-k\}$ defines a connected component. The connectivity for cases $p \in \{2,3,5\}$ can be checked directly by the connectivity operators.
\end{proof}

\begin{rema}
In the case that $n=4r$, where $r \geq 1$ is odd, but different from a prime, we may use Proposition \ref{interseccion} in order to observe that  ${\mathcal M}_{0,[n+1]}^{\mathbb R}$ is connected for $r=1,9,15,21,27,33$ and it is not connected for $r=25, 35$. In particular, all the above permit to see that there are exactly $32$ values of $n \in \{4,\ldots,100\}$ having not connected real locus, these values being given by 
$10,14,18,22,26,28,30,34,38,42,44,46,50,52,54,58,62,66,68,70,74,76,78,$ 
$82,84,86,88,90,92,94,98,100.$
\end{rema}

\begin{rema}[On the field of moduli and fields of definition]\label{observareal}
The group ${\rm Gal}({\mathbb C})$, of field automorphisms of ${\mathbb C}$, acts naturally on $\Omega_{n}$ by the following rule:
if $\nu \in {\rm Gal}({\mathbb C})$ and $\lambda=(\lambda_{1},\ldots,\lambda_{n-2}) \in \Omega_{n}$, then $\nu(\lambda_{1},\ldots,\lambda_{n-2}):=(\nu(\lambda_{1}),\ldots,\nu(\lambda_{n-2}))$. 
The field of moduli ${\mathcal M}_{\lambda}$ of the point $\lambda \in \Omega_{n}$ is the fixed field of the group $\{\nu \in {\rm Gal}({\mathbb C}): \nu(\lambda)=T(\lambda); \; {\rm some} \; T \in {\mathbb G}_{n}\}$.  
A field of definition of $\lambda$ is any subfield ${\mathbb K}$ of ${\mathbb C}$ such that there is an irreducible non-singular projective algebraic curve $X$ of genus zero defined over ${\mathbb K}$ and there is an isomorphism $\psi:\widehat{\mathbb C} \to X$ such that the set $\{\psi(\infty),\psi(0),\psi(1),\psi(\lambda_{1}),\ldots, \psi(\lambda_{n-2})\}$ is invariant under the action of ${\rm Gal}({\mathbb C}/{\mathbb K})$ (the subgroup of all those field automorphisms of ${\mathbb C}$ acting as the identity on ${\mathbb K}$). It can be seen that ${\mathcal M}_{\lambda}$ is contained inside every field of definition of it and that it is the intersection of all its fields of definition \cite{Koizumi}.
In particular, ${\mathcal M}_{\lambda} \leq {\mathbb R}$ if and only $\lambda$ is the fixed point of an antiholomorphic automorphism of $\Omega_{n}$, and 
${\mathbb R}$ is a field of definition of $\lambda$ if and only if  $\{\infty,0,1,\lambda_{1},\ldots,\lambda_{n-2}\}$ is kept invariant under an anticonformal involution of the Riemann sphere, that is, if and only if $\lambda$ is a fixed point of a symmetry of $\Omega_{n}$.
\end{rema}

\section{Acknowledgments}
The authors would like to thanks to the anonymous referees for their valuable comments and suggestions which permited to improve the presentation of this paper.



\begin{thebibliography}{99}
\bibitem{BCI}
G. Bartolini, A. F. Costa and M. Izquierdo.
On the connectivity of branch loci of moduli spaces.
{\it Annales Academiae Scientiarum Fennicae} Mathematica {\bf 38} (2013), 245--258.

\bibitem{Bers1}
L. Bers.
Spaces of Riemann surfaces.
{\it Proceedings of the International Congress of Mathematicians} (Edinburgh, 1958), pp. 349--361.

\bibitem{Bers2}
L. Bers.
Uniformization, moduli, and Kleinian groups.
{\it Bulletin of the London Mathematical Society} {\bf 4} (1972), 257--300.


\bibitem{BGG}
E. Bujalance, J. M. Gamboa and G. Gromadzki.
The full automorphism groups of hyperelliptic Riemann surfaces.
{\it Manuscript Math.} {\bf 79} (1993), 267--282.

\bibitem {BSS}
P. Buser, M. Sepp\"{a}l\"{a} and R. Silhol. Triangulations and
moduli space of Riemann surfaces with group actions. 
\textit{Manuscripta Math.} \textbf{88} (1995), 209--224.

\bibitem {CI}
A. F. Costa and M. Izquierdo. 
On the connectedness of the locus of real Riemann surfaces. 
\textit{Ann. Acad. Sci. Fenn. Math.} {\bf 27}, No. 2 (2002), 341--356.

\bibitem{CI:Maximal}
A. F. Costa and  M. Izquierdo.
 Maximal order of automorphisms of trigonal Riemann surfaces. 
 {\it J. Algebra} {\bf 323} (1)  (2010), 27--31.


\bibitem{CI:Maximal2}
A. F. Costa and  M. Izquierdo.
Corrigendum to ``Maximal order of automorphisms of trigonal Riemann surfaces"
 [{\it J. Algebra} {\bf 323} (1) (2010) 27--31]. 
 {\it J. Algebra} {\bf 341} (2011), 313--314.


\bibitem {CI2}
A. F. Costa, M. Izquierdo and A. M. Porto. 
On the connectedness of the branch locus of moduli space of hyperelliptic Klein surfaces with one boundary. 
\textit{Int. J. of Math.} {\bf 28}, No. 5 (2017), 1750038, 15 pp.



\bibitem{CH}
A. F. Costa and R. A. Hidalgo. 
On the connectedness of the set of Riemann surfaces with real moduli.
{\it Archiv der Mathematik}  {\bf 110}, No. 5 (2018), 305--310.


\bibitem{EK}
C. J. Earle and I. Kra.
On isometries between Teichm\"uller spaces. 
{\it Duke. Math. J.} {\bf 41} (1974), 583--591.


\bibitem{GHL}
G. Gonzalez, R. A. Hidalgo and M. Leyton.
Generalized Fermat Curves.
{\it Journal of Algebra} {\bf 321} (2009), 1643--1660.


\bibitem{Harvey}
W. Harvey.
On branch loci in Teichm\"uller space.
{\it Trans. Amer. Math. Soc.} {\bf 153} (1971), 387--399.

\bibitem{H-M}
W. Harvey and C. MacLachlan.
On mapping class groups and Teichm\"uller spaces.
{\it Proc. London Math. Soc.} {\bf 30} (1975), 496--512.

\bibitem{CH:p-gonal}
A. F. Costa and R. A. Hidalgo.
 Automorphisms of non-cyclic $p$-gonal Riemann surfaces. 
 {\it Mosc. Math. J.} {\bf 16} No. 4 (2016), 659--674.


\bibitem{HKLP}
R. A. Hidalgo, A. Kontogeorgis, M. Leyton-\'Alvarez and P. Paramantzoglou.
Automorphisms of the Generalized Fermat curves.
{\it Journal of Pure and Applied Algebra} {\bf 221} (2017), 2312--2337.

\bibitem{Igusa}
J. Igusa.
Arithmetic variety of moduli for genus two.
{\it  Annals of Mathematics} {\bf 72} (1960), 612--649.

\bibitem{Koizumi}
S. Koizumi.
Fields of moduli for polarized abelian varieties and for curves.
{\it Nagoya Math. J.} {\bf 48} (1972), 37--55.

\bibitem{Lu-Tan}
X. Lu and S-L. Tan. 
On the gonality of an algebraic curve and its abelian automorphism groups. 
{\it Comm. Algebra} {\bf 43} No. 4 (2015), 1509--1523.

\bibitem{Nag}
S. Nag.
{\it The complex analytic theory of Teichm\"ller spaces}. John Wiley \& Sons, New York, 1988.


\bibitem{Patterson}
D. B. Patterson.
Some Remarks on the Moduli of Punctured Spheres.
{\it American Journal of Mathematics} {\bf 95}, No. 4 (1973), 713--719.

\bibitem{Prill}
D. Prill.
Local classification of quotients of complex manifolds by discontinuous groups.
{\it Duke Math. J.} {\bf 34} (1967), 375--386.

\bibitem{Rauch}
H. E. Rauch.
A transcendental view of the space of algebraic Riemann surfaces.
{\it Bulletin of the American Mathematical Society} {\bf 71} (1965), 1--39. Errata: ibid., vol. 74 (1968), p. 767.

\bibitem{Royden}
H. L. Royden. 
Automorphisms and isometries of Teichm\"uller space. 
{\it Advances in the Theory of Riemann Surfaces}, Ann. of Math. Studies {\bf 66}
(ends L. V. Ahlfors et al.; Princeton University Press, Princeton, NJ, 1971) 369--383.

\bibitem{Schneps}
L. Schneps.
Special Loci in Moduli Spaces of Curves.
In {\it Galois Groups and Fundamental Groups.} MSRI Publications {\bf 41} (2003),  217--275.


\bibitem {Seppala}
M. Sepp\"{a}l\"{a}. 
Real algebraic curves in the moduli space of complex curves. 
\textit{Comp. Math.} \textbf{74} (1990), 259--283.
 
\end{thebibliography}
\end{document}